\newtheorem{thm}{Theorem}
\newtheorem{rem}{Remark}
\newtheorem{df}{Definition}
\newtheorem{pro}{Property}
\newcommand{\Kone}{K}
\newcommand{\Ktwo}{\mathbb{K}}
\newcommand{\Pone}{P}
\newcommand{\Ptwo}{\mathbb{P}}
\journalname{Numerical Algorithms}
\begin{document}

\title{Enabling four-dimensional conformal hybrid meshing with cubic pyramids}


\author{Miroslav S. Petrov \and
        Todor D. Todorov    \and
        Gage S. Walters     \and
        David M. Williams  \and
        Freddie D. Witherden
}


\institute{M. Petrov \at
Department of Technical Mechanics,
Technical University, 5300 Gabrovo, Bulgaria
          \and
           T. Todorov \at
Department of Mathematics, Informatics and Natural Sciences,
Technical University, 5300 Gabrovo, Bulgaria
          \and 
          G. Walters \at
Department of Mechanical Engineering, The Pennsylvania State University, University Park, Pennsylvania 16802, USA
          \and 
          D. Williams \at
Department of Mechanical Engineering, The Pennsylvania State University, University Park, Pennsylvania 16802, USA \\
\email{david.m.williams@psu.edu} 
\and
        F. Witherden \at
Department of Ocean Engineering, Texas A\&M University, College Station, Texas 77843, USA
}

\date{Received: date / Accepted: date}

\maketitle

\begin{abstract}
The main purpose of this article is to develop a novel refinement strategy for four-dimensional hybrid meshes based on cubic pyramids. This optimal refinement strategy subdivides a given cubic pyramid into a conforming set of congruent cubic pyramids and invariant bipentatopes. The theoretical properties of the refinement strategy are rigorously analyzed and evaluated. In addition, a new class of fully symmetric quadrature rules with positive weights are generated for the cubic pyramid. These rules are capable of exactly integrating polynomials with degrees up to 12. Their effectiveness is successfully demonstrated on polynomial and transcendental functions. Broadly speaking, the refinement strategy and quadrature rules in this paper open new avenues for four-dimensional hybrid meshing, and space-time finite element methods. 

\keywords{4D hybrid meshes \and cubic pyramids \and bipentatopes \and pentatopes \and optimal refinement strategy \and quadrature}
\subclass{52B11 \and 65N30 \and 65N50 \and 65D30 \and 65D32}
\end{abstract}

\section{Introduction} \label{intro}
The classical way of solving nonstationary boundary value problems applies separate discretizations in space and time.
One promising alternative to this approach is the finite element method with full space-time discretization.  
In principle, this method can be very effective for solving hyperbolic and parabolic problems.
Some basic examples of space-time methods can be found in~\cite{steinbach2015space,steinbach2017algebraic,toulopoulos2020space} and the recent survey of Langer and Steinbach~\cite{langer2019space}. 
In addition, a representative example of this work is provided by Dumont et al.~\cite{dumont2012space} who developed a space-time finite element method to solve elastodynamics problems. 
Since the dynamics problem in~\cite{dumont2012space} involves a three-dimensional spatial body, the authors have constructed four-dimensional meshes. 
Dumont et al.~have not developed general, unstructured triangulations in four-dimensional domains. 
Rather, they firstly make a spatial triangulation and then extend the domain in the temporal direction to obtain a four-dimensional finite element triangulation. 
A similar approach for generating four-dimensional simplex meshes has been employed by Neum{\"u}ller and Karabelas in~\cite{neumuller2019generating}. A detailed survey of four-dimensional structured and unstructured simplicial meshing techniques is provided by Caplan~\cite{caplan2019four} and Frontin et al.~\cite{frontin2020polytopes}. Finally, examples of refinement strategies for high-dimensional simplicial meshes have been developed in the work of Brandts et al.~\cite{brandts2007simplicial}, and Korotov and K\v{r}\'{\i}\v{z}ek~\cite{korotov2014red}.

There have been virtually no efforts to develop four-dimensional hybrid meshes. 
However, in the last few decades, there has been interest in the development of hybrid \emph{three-dimensional} meshes~\cite{bergot2010higher,cao2012new}, with some important work being contributed by Yamakawa and coworkers~\cite{yamakawa2011subdivision,yamakawa2003increasing,yamakawa2009converting,yamakawa201088,yamakawa2011automatic}. In~\cite{yamakawa2011automatic}, Yamakawa and Shimada developed a procedure for creating hexahedron dominant meshes in order to model thin-wall structures. 
Additionally, Yamakawa et al. described a conforming coupling between hexahedron and tetrahedron meshes by using square pyramid interface elements~\cite{yamakawa2009converting}, and using triangular prism interface elements~\cite{yamakawa2011subdivision}. 
The conforming coupling of hexahedra and tetrahedra via pyramids has been discussed independently by Meshkat and Talmor~\cite{meshkat2000generating}, and Devloo et al.~\cite{devloo2019high}. 
In addition, Pantano and Averill~\cite{pantano2007penalty} have used a non-conforming, penalty-based procedure to couple independently modeled three-dimensional finite element meshes. 
Similar non-conforming techniques using mortar elements have been explored by Maday et al.~\cite{maday1988nonconforming} and Seshaiyer and Suri~\cite{seshaiyer2000hp}. 
Furthermore, hybrid non-conforming hexahedral-tetrahedral meshes have been examined by Reberol and L{\'e}vy~\cite{reberol2016low}.
Unfortunately, these non-conforming meshes are incompatible with most standard finite element methods, with the exception of discontinuous Galerkin methods.
Evidently, the most convenient way to unify independently created finite element meshes is through the conforming coupling of the elements in the interface subdomain. This strongly motivates the importance of transitional elements, such as pyramidal and prismatic elements.

Let us briefly review previous efforts to analyze transitional elements. 
Evidently, the three-dimensional prismatic elements are relatively straightforward to analyze, as they can be constructed by forming tensor products between line segments and triangles, whereas, a similar procedure is impossible for pyramids. 
As a result, prismatic elements inherit their interpolation and quadrature procedures from well-established 1D/2D procedures, and researchers have primarily focused their attention on pyramids. 
The pioneering work on pyramids was performed by Bedrosian~\cite{bedrosian1992shape} in the early nineties. 
He was the first researcher who explained the role of pyramidal elements for coupling different kinds of meshes, and developed the first interpolation and integration procedures. 
A number of researchers have followed in Bedrosian's footsteps~\cite{ainsworth2017lowest,bergot2010higher,chan2015comparison,chan2015hp,chan2016orthogonal,chan2016short,coulomb1997pyramidal,gillette2016serendipity}. In particular, Bergot et al.~\cite{bergot2010higher} performed a deep analysis of the interpolation properties of pyramidal elements. 
Chan and Warburton extended this work, developing alternative sets of basis functions~\cite{chan2016orthogonal,chan2016short}, analyzing the numerical stability of interpolation points~\cite{chan2015comparison}, and deriving trace inequalities~\cite{chan2015hp}. 
Furthermore, Gillette~\cite{gillette2016serendipity} recently used techniques from finite element exterior calculus~\cite{arnold2010finite,arnold2006finite} to construct seredipity-type basis functions on pyramids. 
Quadrature formulas on pyramidal elements have been created and analyzed in~\cite{bergot2010higher,nigam2012numerical,witherden2015identification}.
The error estimates arising from the quadrature rules of Bergot et al.~\cite{bergot2010higher} have been improved by Nigam and Phillips~\cite{nigam2012numerical} in order to obtain the classical rate of convergence for second-order boundary value problems. However, the best set of rules to date (in terms of integration strength for a minimum number of points) was identified by Witherden and Vincent in~\cite{witherden2015identification}.

Now, let us return our attention to four-dimensional space. It turns out that the three-dimensional definitions of transitional elements do not extend to higher dimensions.
For example, while the triangular prism provides a conforming interface between the hexahedron and the tetrahedron in three dimensions, its analog in four dimensions (the tetrahedral prism) does \emph{not} provide a conforming interface between the tesseract and the pentatope. 
This immediately follows from the fact that the tetrahedral prism has two  tetrahedral facets and four triangular prismatic facets, and therefore it cannot conformally interface with the tesseract. In a similar fashion, the cubic pyramid (the four-dimensional analog of the square pyramid) has one hexahedral facet and six square pyramidal facets, and therefore it cannot conformally interface with the pentatope. Fortunately, the lack of transitional elements between tesseracts and pentatopes does not completely prevent the formulation of hybrid meshes in four dimensions. We will show that it is still possible to build hybrid meshes using tesseract, cubic pyramid, and pentatopal elements. We note that some basic geometric properties of these elements are discussed in the work of Coxeter~\cite{coxeter1940regular,coxeter1973regular}, Sommerville~\cite{mclaren1958introduction}, and Zamboj~\cite{zamboj2018sections}.

The main goals of this paper are: i) to develop conforming hybrid four-dimensional meshes, ii) to create an optimal refinement strategy for these meshes, and iii) to develop numerical integration procedures on the elements of these meshes.
The major contributions of the paper are summarized as follows.
First, we identify a four-dimensional conforming coupling between tesseract elements and cubic pyramid elements.
Next, the tesseract and cubic pyramid elements are subdivided, while maintaining conformity.
Evidently, while the tesseracts can be uniformly subdivided into smaller tesseracts, the cubic pyramids cannot be subdivided into smaller cubic pyramids without leaving gaps. 
Therefore, in the refinement strategy for the cubic pyramids, we use a combination of congruent cubic pyramids \emph{and} invariant bipentatopes.
A simple two-level refinement tree is obtained. 
The theoretical properties of the refinement strategy are thoroughly analyzed. The proposed theoretical results cannot be improved, otherwise the cubic pyramid would be invariant with a one-level refinement tree, which is obviously not the case. 
Finally, we conclude our work by developing numerical integration procedures for the cubic pyramid elements. 
We note that, while such procedures are already well-established for the tesseract and pentatope elements~\cite{frontin2020polytopes,williams2020family}, they have yet to be developed for cubic pyramid elements.

The format of this paper is as follows. 
In Section 2, we introduce some standard notation and terminology. 
In Section 3, we formulate new hybrid meshes of tesseract, cubic pyramid, and bipentatope elements, along with a non-degenerate mesh refinement strategy. 
In Section 4, we develop theoretical results which govern the hybrid meshes. 
In Section 5, we introduce a new set of fully symmetric quadrature rules for cubic pyramid elements. 
Finally, in Section 6, we summarize the main conclusions of the paper.

\section{Preliminary concepts}

\subsection{Background and motivation}

The four-dimensional space $\mathbb{R}^{4}$ has unique geometric properties which cannot be found in other multidimensional Euclidean spaces excluding the two-dimensional one.
This fact can be illustrated by comparing the three- and four-dimensional spaces.
It is well-known that the 3-cube cannot be triangulated by standard tetrahedra (i.e.~cube corners)~\cite{korotov2014red}. 
Rather, the cube is usually triangulated into five simplices, four of which are standard tetrahedra and one of which is a regular tetrahedron. 
Additionally, it is impossible to triangulate the standard tetrahedron with standard tetrahedra. 
Instead, there exists a partition of the standard tetrahedron into six standard tetrahedra and one regular tetrahedron~\cite{todorov2013optimal}.
Interestingly enough, these principles do not extend to the four-dimensional case. Petrov and Todorov~\cite{petrov2018stable} have proved that each tesseract can be divided into standard pentatopes (i.e.~tesseract corners), and each standard pentatope can be partitioned into standard pentatopes.
The latter fact means that an arbitrary 4D \emph{canonical} domain can be divided into standard pentatopes~\cite{petrov2018stable}, which are the most convenient elements from a computational point of view.
The authors have tested six- and eight-dimensional hypercubes for such properties, but without success. Therefore, given its useful properties, more attempts to explore and analyze four-dimensional space are certainly warranted.

In what follows, we will introduce some useful definitions which facilitate our subsequent discussions of $\mathbb{R}^{4}$.

\subsection{Definitions}

\begin{df}
A  simply connected  domain in $\mathbb{R}^{4}$ is called  canonical if there exists a conforming partition of the domain into tesseracts (4-cubes).
\end{df}

\begin{df}
Each $n$-dimensional hypercube can be divided into $2n$ $(n-1)$-hypercube pyramids. These pyramids are called canonical pyramids.
\end{df}

\begin{df}
Let $\Omega$  be a nondegenerated polytope in $\mathbb{R}^4$, and $T_i$, $i=1,2,\ldots,k,$ be $4$-dimensional finite elements. The triangulation 
\begin{align*}
\tau=\left\{T_i\subset \mathbb{R}^4 \ | \ \Omega=\bigcup_{i=1}^k T_i\right\},
\end{align*}
of the polytope $\Omega$ is said to be consistent if $T_i$ and $T_j$, $1\le i<j\le k$, share nothing other than the empty set, a vertex, an edge  or an $\ell$-dimensional facet, $\ell=2,3$.
\end{df}

\begin{df}
The polytope
\begin{align*}
T_{-i}=[t_{1},t_{2},\dots,t_{i-1},t_{i+1},\dots,t_{n+1}],\quad
i=2,\dots,n,
\end{align*}
related to the vertex $t_i$ is  obtained from the polytope  $T=[t_{1},t_{2},\dots,t_{n+1}]$, $n\in \mathbb{N}$ by removing the vertex $t_i$.
The polytopes $T_{-1}$ and $T_{-(n+1)}$ are defined in a similar fashion as
\begin{align*}
T_{-1}=[t_{2},t_{3},\dots,t_{n+1}],\quad
T_{-(n+1)}=[t_{1},t_{2},\dots,t_{n}].
\end{align*}
\end{df}

\begin{df}
The polytopes $T_1$ and  $T_2$ are congruent if one of
them can be obtained from the other by applying a linear transformation
\begin{align*}
T_2 =\underline{b}+cQT_1,
\end{align*}
where $c\not=0$ is a scaling factor, $\underline{b}$ is a translation vector,
and  $Q$ is an orthogonal matrix.
\end{df}

\begin{df}
We say that two polytopes  $T_1$ and $T_2$ are from the same class  if  they are congruent.
\end{df}
The class
\begin{align*}
[K]=\{T\subset \mathbb{R}^4 \ | \ T\cong K\},
\end{align*}
consists of all equivalent pyramids to the pyramid $K$ in $\mathbb{R}^4$ with respect to the congruence relation.

\begin{df}
A polytope $T$ is said to be invariant concerning a refinement strategy $\mathcal{A}$ if all elements of $\mathcal{A}T$ belong to $[T]$.
\end{df}

\begin{df}
The degeneracy measure  of an arbitrary pyramid
$K = [k_1,k_2,$ $\ldots,$ $k_{n+1}]$ is equal to
\begin{equation*}
\delta(K)= \frac{h(K)\cdot {\rm vol}(\partial K)}{8\cdot {\rm vol}(K)},
\end{equation*}
where $h(K)$ is the diameter of $K$.
\end{df}
Here, we use ${\rm vol}(T)$ and ${\rm vol}(\partial T)$
instead of ${\rm vol}_{4}(T)$ and ${\rm vol}_{3}(\partial T)$ in order to avoid complicated notation.

\subsection{Reference elements}

In this section, we introduce a set of convenient reference elements in $\mathbb{R}^4$. 
We review the  important properties of these elements, including their vertex locations, integration limits, and orthonormal polynomial bases. 

In order to prepare our discussion of the reference elements, let us briefly review the definition of an orthonormal basis. Broadly speaking, the basis maintains the following key property
\begin{equation*}
    \int_{\Omega} \psi_{ijkq} \left( \boldsymbol{x} \right) \psi_{rstv} \left( \boldsymbol{x} \right) d \boldsymbol{x} = \delta_{ir} \delta_{js} \delta_{kt} \delta_{qv},
\end{equation*}
where $\delta_{ir}$ is the Kronecker delta. 
The orthonormal basis functions of degree $p$ have the form
\begin{equation*}
    \psi_{ijkq} \left( \boldsymbol{x} \right) = \zeta_{ijkq} \, \hat{P}_i^{(\alpha_1 ,\beta_1)} \left( a \right) \hat{P}_j^{(\alpha_2 ,\beta_2)} \left( b \right) \hat{P}_k^{(\alpha_3 ,\beta_3)} \left( c \right) \hat{P}_q^{(\alpha_4 ,\beta_4)} \left( d \right) f\left( \boldsymbol{x} \right),
\end{equation*}
where $i + j +k +q \leq p$, $a = a\left(x_1, x_4\right)$, $b = b\left(x_2, x_4\right)$, $c = c\left(x_3, x_4\right)$, $d = d\left(x_4\right)$, and $f = f\left( \boldsymbol{x}\right)$ are functions depending on the element type, $\zeta_{ijkq}$, $\alpha_1 ,\ldots, \alpha_4$ and $\beta_1, \ldots, \beta_4$ are constants, and $\hat{P}_n^{(\alpha ,\beta)}$ are the 1D orthonormal Jacobi polynomials defined as
\begin{equation*}
    \hat{P}_n^{(\alpha,\beta)} \left( x_1 \right) = \frac{P_n^{(\alpha,\beta)} (x_1)}{\sqrt{\frac{2^{\alpha+\beta+1}}{2n + \alpha + \beta + 1}\frac{ \left( n + \alpha \right)! \left( n + \beta \right)! }{ n! \left( n + \alpha + \beta \right)!}}} .
\end{equation*}
Here, the functions $P_n^{(\alpha,\beta)}$ are the well-known orthogonal Jacobi polynomials, which themselves are \emph{not} orthonormal. The scale factor under the square root operator provides the desired normalization. 

It is convenient to omit the Jacobi polynomial superscripts when $\alpha = \beta = 0$. Consequently, we frequently write $\hat{P}_{n}$ in place of $\hat{P}_{n}^{\left(0,0\right)}$.

Now, let us turn our attention to the definitions of the reference elements.

\subsubsection{Tesseract}

Consider the reference tesseract $T^{\ast}$ centered at the origin, having edge length $l[T^{\ast}] = 2$.
The vertices are defined such that
\begin{align*}
  T^{\ast} = \Big[ &t_1\left(-1, -1, -1, -1\right), \; t_2\left(-1, 1, -1, -1\right), \; t_3\left(-1, -1, -1, 1\right), \; t_4\left(1, -1, -1, -1\right), \\
  &t_5\left(-1, 1, -1, 1\right), \; t_6\left(1, -1, -1, 1\right), \;
  t_7\left(1, 1, -1, -1\right), \;
  t_8\left(1, 1, -1, 1\right), \\
  &t_9\left(-1, -1, 1, 1\right), \;
  t_{10}\left(-1, 1, 1, 1\right), \;
  t_{11}\left(-1, -1, 1, -1\right), \;
  t_{12}\left(1, -1, 1, 1\right), \\
  &t_{13}\left(-1, 1, 1, -1\right), \;
  t_{14}\left(1, -1, 1, -1\right), \;
  t_{15}\left(1, 1, 1, 1\right), \;
  t_{16}\left(1, 1, 1, -1\right) \Big],
\end{align*}
as shown in Figure~\ref{tesseract}. 
\begin{figure}[h!]
\begin{center}
\includegraphics[width=10cm]{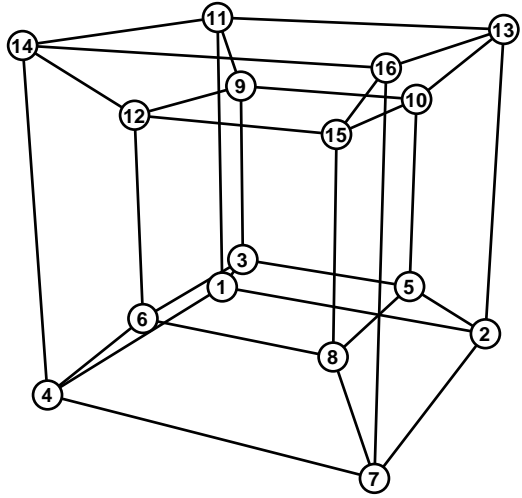}
\end{center}
\caption{The reference tesseract $T^{\ast}$.}
\label{tesseract}
\end{figure}
The volume is $\text{vol}\left( T^{\ast} \right) = 16$ and it is straightforward to define bounds of integration
\begin{equation*}
    \int_{-1}^1 \int_{-1}^1 \int_{-1}^1 \int_{-1}^1 dx_1 dx_2 dx_3 dx_4 = 16.
\end{equation*}
Trivially, the orthonormal basis inside of the reference tesseract is given by
\begin{equation*}
    \psi_{ijkq} \left( \boldsymbol{x} \right) =  \hat{P}_i \left( a \right) \hat{P}_j \left( b \right) \hat{P}_k \left( c \right) \hat{P}_q \left( d \right),
\end{equation*}
where $a = x_1$, $b = x_2$, $c = x_3$, and $d = x_4$.

\subsubsection{Cubic pyramid}

Consider the reference cubic pyramid 
$K^{\ast}$ centered at the origin, having edge length $l[K^{\ast}] = 2$. 
The vertices are defined such that
\begin{align*}
    K^{\ast} = \Big[ &k_1\left(-1, -1, -1, -1\right), \; k_2\left(-1, 1, -1, -1\right), \; k_3\left(-1, -1, 1, -1\right), \; k_4\left(1, -1, -1, -1\right), \\
    & k_5\left(-1, 1, 1, -1\right), \; k_6\left(1, -1, 1, -1\right), \;
    k_7\left(1, 1, -1, -1\right), \;
    k_8\left(1, 1, 1, -1\right), \\
    &k_9\left(0, 0, 0, 0\right) \Big],
\end{align*}
as shown in Figure~\ref{cubic_pyramid}.
\begin{figure}[h!]
\begin{center}
\includegraphics[trim = 0cm 2.5cm 0cm 2cm, clip,width=10cm]{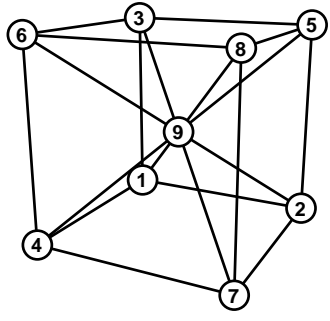}
\end{center}
\caption{The reference cubic pyramid $K^{\ast}$.}
\label{cubic_pyramid}
\end{figure}
The volume of the cubic pyramid is $1/8$th of the tesseract, or equivalently $\text{vol}(K^{\ast}) = \frac{A h}{4}$ where $A = 8$ is the area of the hexahedron base and $h = 1$ is the height of the cubic pyramid. 
Based on the vertices of $K^{\ast}$, the integration bounds for the cubic pyramid are
\begin{equation*}
    \int_{-1}^{0} \int_{x_4}^{-x_4} \int_{x_4}^{-x_4} \int_{x_4}^{-x_4} \,\mathrm{d}x_1 \, \mathrm{d}x_2\, \mathrm{d}x_3 \, \mathrm{d}x_4 = 2.
\end{equation*}
The orthonormal polynomial basis for the cubic pyramid is found to be
\begin{equation}
    \label{eq:cubpb}
    \psi_{ijkq} \left( \boldsymbol{x} \right) = \sqrt{2^{\mu_{ijk}+1}} \hat{P}_i \left( a \right) \hat{P}_j \left( b \right) \hat{P}_k \left( c \right) \hat{P}_q^{(\mu_{ijk},0)} \left( d \right) \left(-x_4 \right)^{i+j+k},
\end{equation}
where $a = -\frac{x_1}{x_4}$, $b=-\frac{x_2}{x_4}$, $c = -\frac{x_3}{x_4}$, $d=2x_4 +1$, and $\mu_{ijk} = 2i + 2j + 2k + 3$.

\subsubsection{Bipentatope}

Consider the reference bipentatope
$P^{\ast}$ having edge length $l[P^{\ast}] = 1$. 
This element is also frequently called the tetrahedral bipyramid. 
The vertices are defined such that
\begin{align*}
    P^{\ast} = \Big[ &p_1\left(\tfrac{1}{2}, \tfrac{1}{2}, \tfrac{1}{2}, -\tfrac{1}{2}\right), \; p_2\left(\tfrac{1}{2}, \tfrac{1}{2}, -\tfrac{1}{2}, -\tfrac{1}{2}\right), \; p_3\left(-\tfrac{1}{2}, \tfrac{1}{2}, -\tfrac{1}{2}, -\tfrac{1}{2}\right), \; p_4\left(-\tfrac{1}{2}, \tfrac{1}{2}, \tfrac{1}{2}, -\tfrac{1}{2}\right), \\
    &p_5\left(0, 1, 0, -1\right), \; p_6\left(0, 0, 0, -1\right) \Big],
\end{align*}
as shown in Figure~\ref{bipentatope}. 
\begin{figure}[h!]
\begin{center}
\includegraphics[trim = 0cm 0.5cm 0cm 0.5cm, clip,width=10cm]{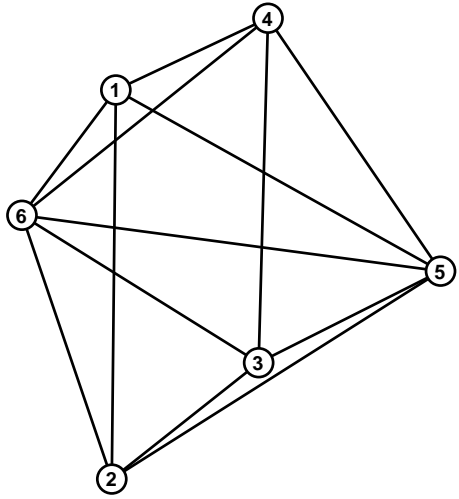}
\end{center}
\caption{The reference bipentatope $P^{\ast}$.}
\label{bipentatope}
\end{figure}
The reference bipentatope volume is $\text{vol}\left(P^{\ast}\right) = \tfrac{1}{24}$. 

In order to facilitate interpolation and integration, the reference bipentatope can be divided into two pentatopes, as shown in Figure~\ref{split_bipentatope}.
\begin{figure}[h!]
\begin{center}
\includegraphics[width=7cm]{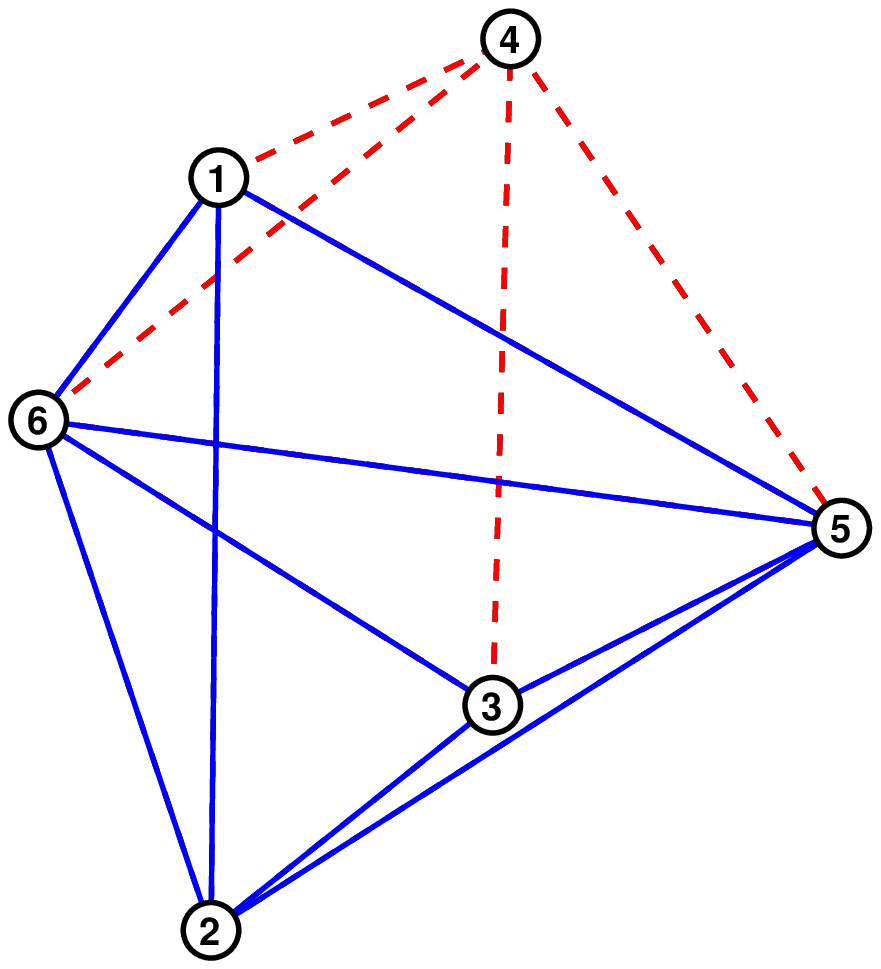}
\begin{subfigure}[b]{0.45\textwidth}
\includegraphics[width=6cm]{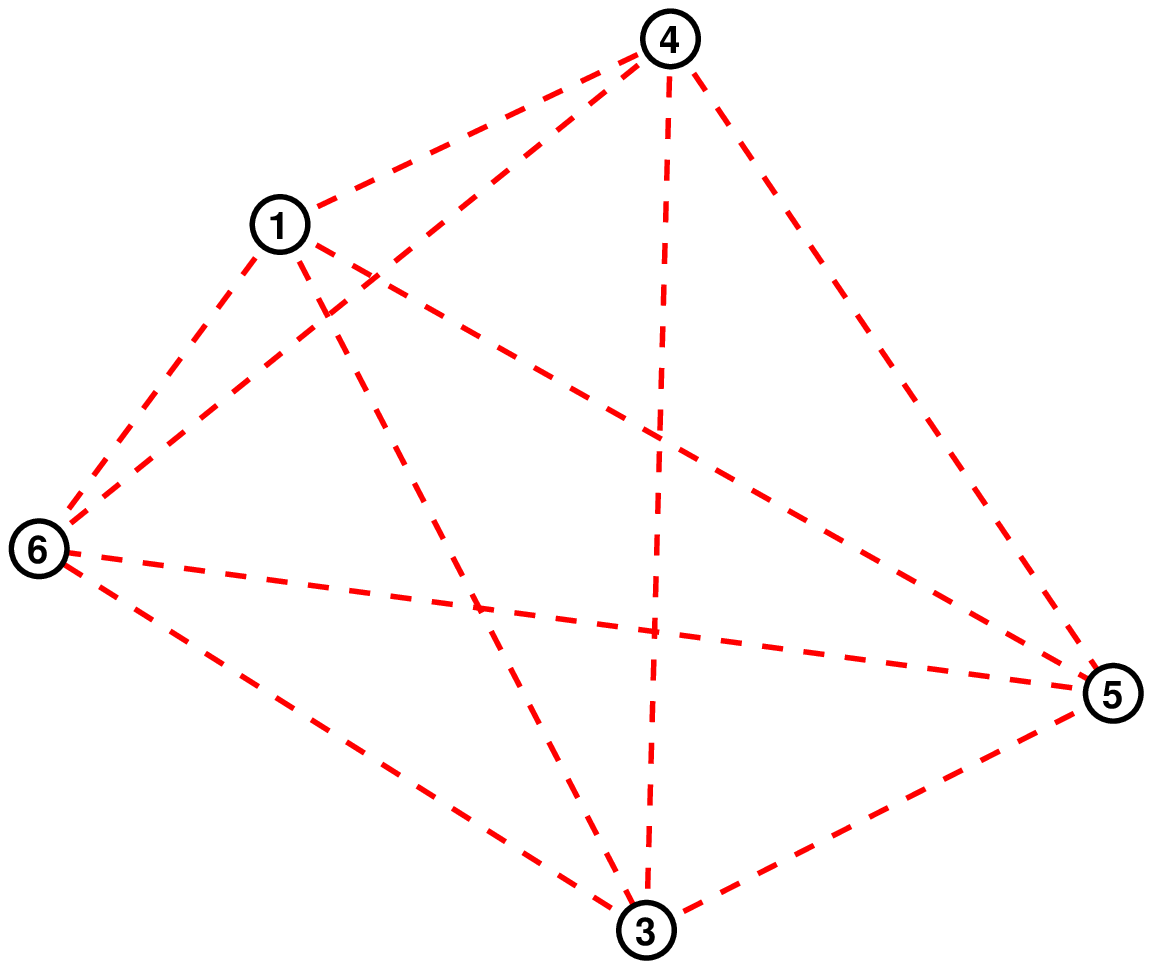}
\end{subfigure}
\hfill
\begin{subfigure}[b]{0.45\textwidth}
\includegraphics[width=6cm]{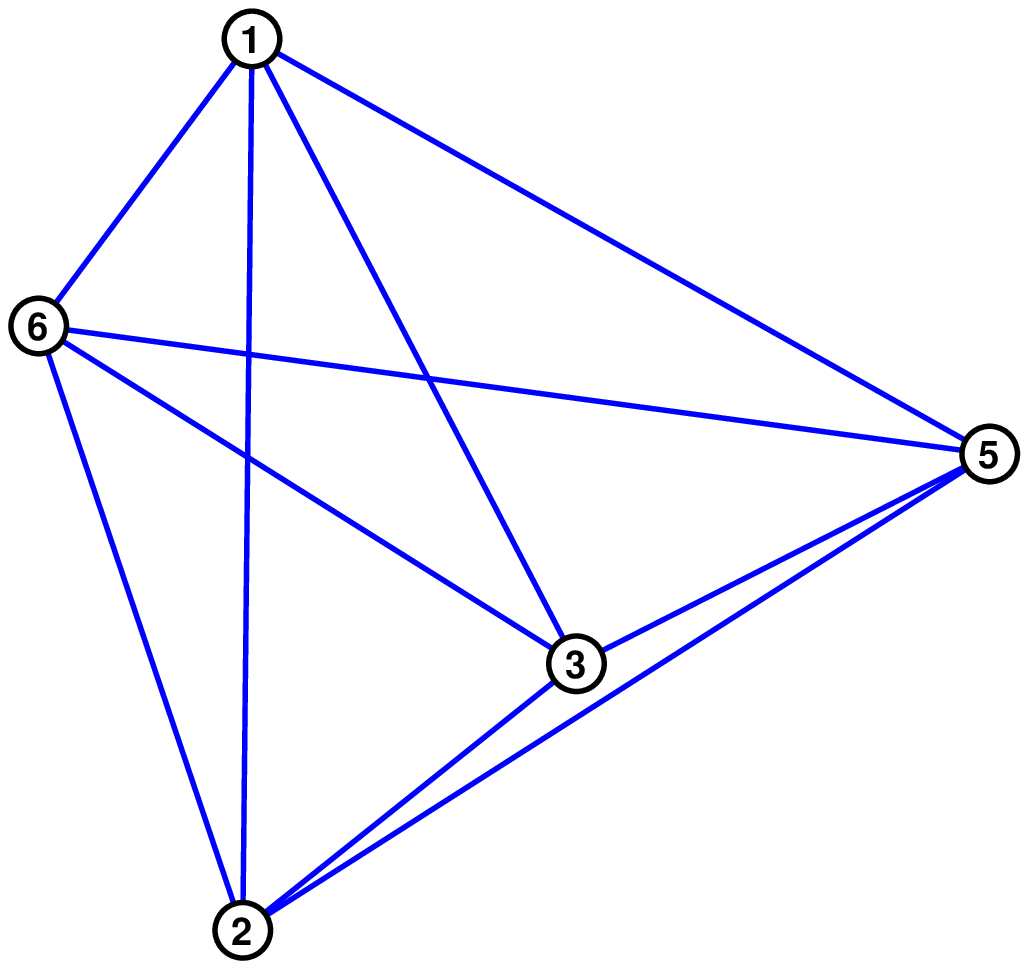}
\end{subfigure}
\end{center}
\caption{A subdivision of the reference bipentatope into two pentatopes.}\label{split_bipentatope}
\end{figure}
There exists a convenient mapping from these pentatopes onto the standard pentatope $S^{\ast}$ shown in Figure~\ref{standard_pentatope}.
\begin{figure}[h!]
\begin{center}
\includegraphics[trim = 0cm 0.5cm 0cm 0.5cm, clip,width=10cm]{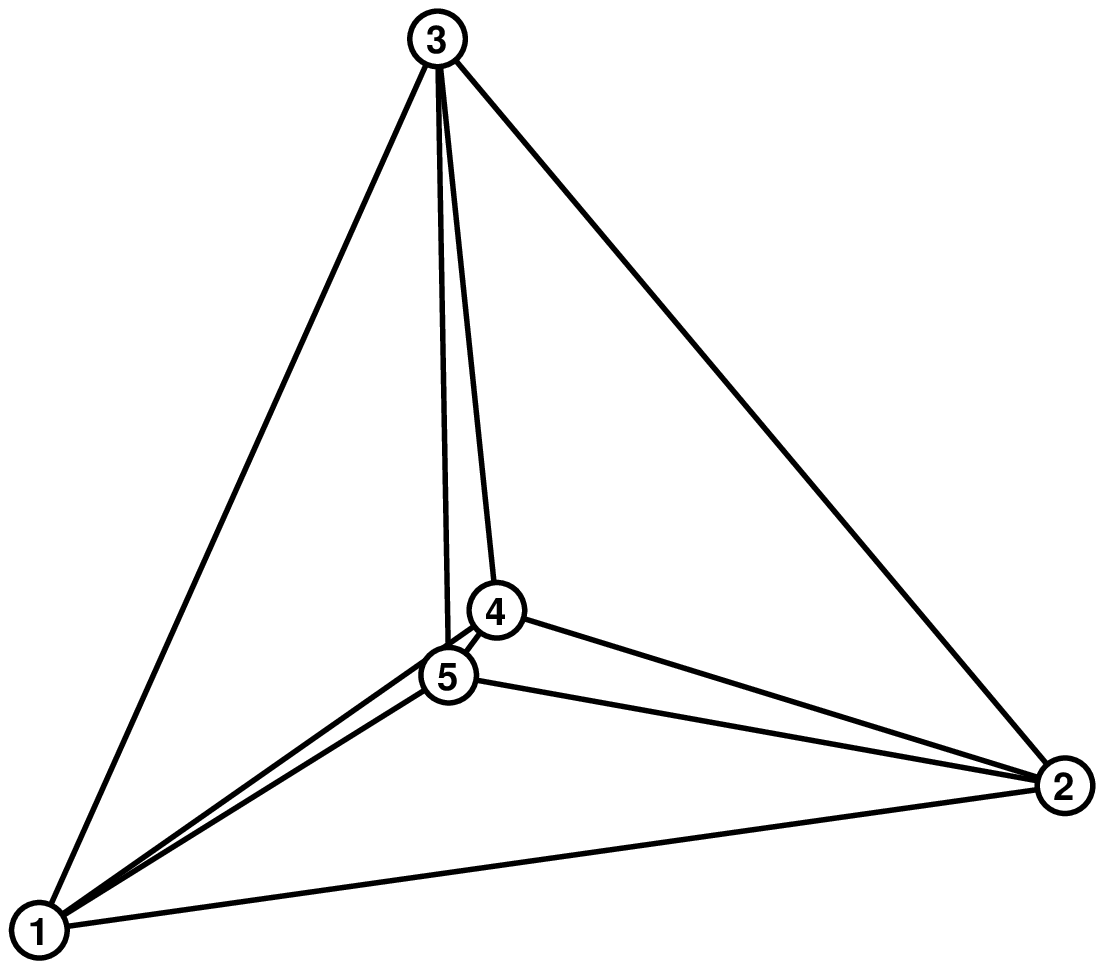}
\end{center}
\caption{The standard pentatope $S^{\ast}$.}
\label{standard_pentatope}
\end{figure}
We note that $S^{\ast}$ has the following vertices
\begin{align*}
    S^{\ast} = &\Big[s_1 \left(1,-1,-1,-1\right), s_2 \left(-1,1,-1,-1\right), s_3\left(-1,-1,1,-1\right), s_4 \left(-1,-1,-1,1\right), \\
    &s_5\left(-1,-1,-1,-1\right) \Big].
\end{align*}
The standard pentatope volume is $\text{vol} \left(S^{\ast}\right) = \tfrac{2}{3}$ and the bounds of integration are
\begin{equation*}
    \int_{-1}^1 \int_{-1}^{-1-x_4} \int_{-1}^{-1-x_3 - x_4} \int_{-1}^{-1-x_2 - x_3 -x_4 } dx_1 dx_2 dx_3 dx_4 = \frac{2}{3}.
\end{equation*}
The orthonormal polynomial basis takes the following form
\begin{align}
    \nonumber \psi_{ijkq} \left( \boldsymbol{x} \right) = &4 \hat{P}_i \left( a \right) \hat{P}_j^{(2i+1,0)} \left( b \right) \hat{P}_k^{(2i+2j+2,0)} \left( c \right) \hat{P}_q^{(2i+2j+2k+3,0)} \left( d \right) \\
    &\times \left( 1-b \right)^{i} \left( 1-c \right)^{i + j} \left( 1-d \right)^{i + j +k}, \label{pent_basis}
\end{align}
where $a = -2\frac{x_1 +1}{x_2 +x_3 +x_4 +1} - 1$, $b=-2\frac{1+x_2}{x_3 +x_4} - 1$, $c = 2\frac{1+x_3}{1-x_4} - 1$, and $d=x_4$.

Lastly, we consider the following rescaled and shifted version of the standard pentatope
\begin{align*}
\mathcal{S} ^{\ast} = \left[
s_1^*(1,0,0,0),s_2^*(0,1,0,0),s_3^*(0,0,1,0),
s_4^*(0,0,0,1),s_5^*(0,0,0,0)\right],
\end{align*}
where by inspection
\begin{align*}
    \mathcal{S}^{\ast} = \tfrac{1}{2} S^{\ast} + \left(1,1,1,1\right).
\end{align*}
This reference element has been used by many authors~\cite{brandts2007simplicial,petrov2018stable} due to its mathematical simplicity and convenience. It is frequently referred to as the `cube corner'. Its volume is $\text{vol}\left( \mathcal{S}^{\ast} \right) =\tfrac{1}{24}$ and the bounds of integration are
\begin{equation*}
    \int_{0}^1 \int_{0}^{1-x_4} \int_{0}^{1-x_3 - x_4} \int_{0}^{1-x_2 - x_3 -x_4} dx_1 dx_2 dx_3 dx_4 = \frac{1}{24}.
\end{equation*}
The orthonormal polynomial basis on the cube corner can be obtained by a simple linear transformation of the basis in Eq.~\eqref{pent_basis}.

\pagebreak
\clearpage

\section{Hybrid meshes of tesseracts, cubic pyramids, and bipentatopes}

In this section, we introduce a set of hybrid meshes by constructing the following domain model.
Let $\Omega$ be a four-dimensional canonical domain. The domain $\Omega$ is partitioned into two subdomains $\Omega_1$ and $\Omega_2$, which are also canonical. 
We suppose that $\check{\tau}_0$ and $\hat{\tau}_0$ are tesseract triangulations of $\Omega_1$ and $\Omega_2$ such that all elements of both triangulations form a conforming tesseract triangulation $\tau_0$ of the domain $\Omega$.
We define the boundary layers in the subdomains $\Omega_1$ and $\Omega_2$ by
\begin{align*}
B_1&=\{T\in\Omega_1 \ | \
\dim\left(\partial T\cap\Omega_{12}\right)=3\}, \\[1.5ex]
B_2&=\{T\in\Omega_2 \ | \
\dim\left(\partial T\cap\Omega_{12}\right)=3\},\quad
\Omega_{12}=\Omega_{1}\cap\Omega_{2}.
\end{align*}
Furthermore, we construct a hybrid mesh so that $\Omega_1$ is partitioned by tesseract elements and $\Omega_2$ is partitioned by cubic pyramid elements.
Our main goal is to establish a conforming coupling between the different kinds of elements in the boundary layers of both subdomains. 
Additionally, we need a refinement strategy for both groups of elements with as small as possible number of classes and as small as possible measure of degeneracy for the pyramidal elements. 
We restrict ourselves to refinement strategies dividing the edges of each element into two parts. We anticipate that such strategies will be very useful in multigrid and/or adaptive-quadrature procedures. 

We will now define a refinement algorithm for creating nested hierarchical hybrid triangulations in five steps:

\begin{itemize}
    \item[$\left(r_1\right)$] The tesseract triangulations of $\Omega$, denoted by $\check{\tau}_0$, $\hat{\tau}_0$ and $\tau_0=\check{\tau}_0\cup\hat{\tau}_0$ are constructed. We denote this partition operator by $\mathcal{R}$.
    
    \item[$\left(r_2\right)$] Each element $T$ of $\check{\tau}_{0}$ is uniformly divided into sixteen smaller tesseracts. We denote this partition operator by $\mathcal{E}$. Evidently, this operator's definition is straightforward, and does not need to be explicitly stated.
    
    \item[$\left(r_3\right)$] Each element $T$ of $\hat{\tau}_0$ is divided into eight cubic pyramids by connecting the cubic facets of each element to its centroid. 
    We denote this partition operator by $\mathcal{B}$. It is given explicitly by
    \begin{align*}
    \mathcal{B} T^{\ast}
    = \Big\{ &\Kone_{1} [t_{1},t_{2},t_{3},t_{4},t_{5},t_{6},t_{7},t_{8},t_{0}],\ 
    \Kone_{2} [t_{1},t_{3},t_{4},t_{6},t_{9},t_{11},t_{12},t_{14},t_{0}], \\
    &\Kone_{3}[t_{9},t_{10},t_{11},t_{12},t_{13},t_{14},t_{15},t_{16},t_{0}],\ 
    \Kone_{4} [t_{2},t_{5},t_{7},t_{8},t_{10},t_{13},t_{15},t_{16},t_{0}], \\
    &\Kone_{5}[t_{4},t_{6},t_{7},t_{8},t_{12},t_{14},t_{15},t_{16},t_{0}], \ 
    \Kone_{6} [t_{1},t_{2},t_{3},t_{5},t_{9},t_{10},t_{11},t_{13},t_{0}], \\
    &\Kone_{7}[t_{1},t_{2},t_{4},t_{7},t_{11},t_{13},t_{14},t_{16},t_{0}], \
    \Kone_{8} [t_{3},t_{5},t_{6},t_{8},t_{9},t_{10},t_{12},t_{15},t_{0}]
    \Big\}.
    \end{align*}
    One of the resulting cubic pyramids $\Kone_7$ is shown in Figure~\ref{cubic_pyramid_in_tesseract}. Note: $\Kone_7$ is equivalent to $K^{\ast}$, which is a canonical pyramid.
    
    \item[$\left(r_4\right)$] The elements of $\mathcal{B} \hat{\tau}_0$ are further subdivided by the partition operator $\mathcal{L}$, which is defined below.
    \item[$\left(r_5\right)$] Steps 2--4 are performed repeatedly until the desired level of mesh refinement is obtained. These refinement steps are denoted by the operator $\mathcal{H}$.
\end{itemize}

\begin{figure}[h!]
\begin{center}
\includegraphics[width=9cm]{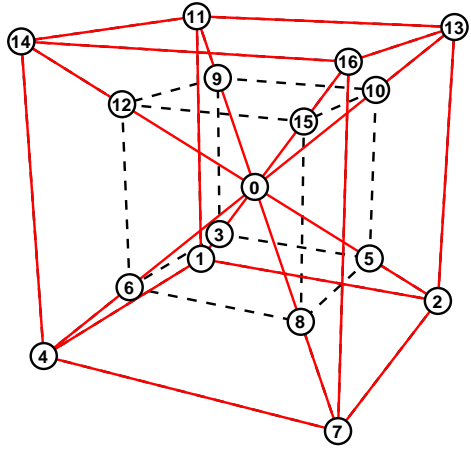}
\end{center}
\caption{The cubic pyramid $\Kone_7$ belonging to the decomposition $\mathcal{B}T^{\ast}$.}
\label{cubic_pyramid_in_tesseract}
\end{figure}

Now, let us turn our attention to formulating an explicit definition for the partition operator $\mathcal{L}$
\begin{align*}
\mathcal{L} X=
\begin{cases}
  \mathcal{D} X, & \mbox{if } X \;\mbox{is a cubic pyramid}\\
  \mathcal{M} X, & \mbox{if } X \; \mbox{is a bipentatope.}
\end{cases}
\end{align*}
It remains for us to define the operators $\mathcal{D}$ and $\mathcal{M}$. The operator $\mathcal{D}$ can be defined as follows
\begin{align*}
    \mathcal{D} K^*= \bigg\{\Big\{ &\Ktwo_1[k_{1},k_{10},k_{11},k_{12},k_{13},k_{14},k_{15},k_{16},k_{21}], \; \Ktwo_2[k_{2},k_{10},k_{13},k_{14},k_{15},k_{17},k_{18},k_{19},k_{20}], \\
    &\Ktwo_3[k_{4},k_{12},k_{14},k_{15},k_{21},k_{26},k_{27},k_{28},k_{29}], \; \Ktwo_4[k_{7},k_{14},k_{15},k_{18},k_{19},k_{27},k_{28},k_{34},k_{35}], \\
    & \Ktwo_5[k_{3},k_{11},k_{13},k_{15},k_{21},k_{22},k_{23},k_{24},k_{25}], \; \Ktwo_6[k_{5},k_{13},k_{15},k_{17},k_{19},k_{22},k_{24},k_{30},k_{31}], \\
    & \Ktwo_7[k_{6},k_{15},k_{21},k_{23},k_{24},k_{26},k_{28},k_{32},k_{33}], \; \Ktwo_8[k_{8},k_{15},k_{19},k_{24},k_{28},k_{30},k_{32},k_{34},k_{36}], \\
    & \Ktwo_9[k_{9},k_{16},k_{20},k_{25},k_{29},k_{31},k_{33},k_{35},k_{36}], \; \Ktwo_{10}[k_{15},k_{16},k_{20},k_{25},k_{29},k_{31},k_{33},k_{35},k_{36}] \Big\}, \\[-0.5ex]
    \Big\{ &\Pone_{1}[k_{15},k_{21},k_{23},k_{24},k_{25},k_{33}], \; \Pone_{2}[k_{11},k_{13},k_{15},k_{16},k_{21},k_{25}], \; \Pone_{3}[k_{12},k_{14},k_{15},k_{16},k_{21},k_{29}], \\
    & \Pone_{4}[k_{15},k_{21},k_{26},k_{28},k_{29},k_{33}], \;
    \Pone_{5}[k_{15},k_{24},k_{28},k_{32},k_{33},k_{36}], \; \Pone_{6}[k_{14},k_{15},k_{27},k_{28},k_{29},k_{35}], \\
    & \Pone_{7}[k_{15},k_{19},k_{28},k_{34},k_{35},k_{36}],\; \Pone_{8}[k_{13},k_{15},k_{17},k_{19},k_{20},k_{31}], \;
    \Pone_{9}[k_{15},k_{19},k_{24},k_{30},k_{31},k_{36}], \\ 
    & \Pone_{10}[k_{14},k_{15},k_{18},k_{19},k_{20},k_{35}], \;
    \Pone_{11}[k_{13},k_{15},k_{22},k_{24},k_{25},k_{31}], \; \Pone_{12}[k_{10},k_{13},k_{14},k_{15},k_{16},k_{20}], \\
    & \Pone_{13}[k_{15},k_{19},k_{20},k_{31},k_{35},k_{36}],\; \Pone_{14}[k_{15},k_{28},k_{29},k_{33},k_{35},k_{36}], \;
    \Pone_{15}[k_{15},k_{16},k_{21},k_{25},k_{29},k_{33}], \\[-1.2ex] 
    & \Pone_{16}[k_{13},k_{15},k_{16},k_{20},k_{25},k_{31}], \;
    \Pone_{17}[k_{15},k_{24},k_{25},k_{31},k_{33},k_{36}],\; \Pone_{18}[k_{14},k_{15},k_{16},k_{20},k_{29},k_{35}]\Big\} \bigg\}.
\end{align*}
All elements $\Ktwo_i$, $i=1,2,\ldots,10$, are cubic pyramids
and all elements $\Pone_j$, $j=1,2,\ldots,18$, are bipentatopes.
The first nine vertices (above) $k_1, k_2, \ldots, k_9$ are previously defined in the definition of $K^{\ast}$. 
In addition, the remaining vertices $k_{10}, \ldots, k_{36}$ can be defined as follows
\begin{alignat*}{4}
    &k_{10} = \tfrac{1}{2}(k_1+k_2), \qquad &&k_{11} = \tfrac{1}{2}(k_1+k_3), \qquad &&k_{12} = \tfrac{1}{2}(k_1+k_4), \qquad &&k_{13} = \tfrac{1}{2}(k_{1}+k_{5}), \\
    &k_{14} = \tfrac{1}{2}(k_{1}+k_{7}), \; &&k_{15} = \tfrac{1}{2}(k_{1}+k_{8}), \; &&k_{16} = \tfrac{1}{2}(k_1+k_9), \; &&k_{17} = \tfrac{1}{2}(k_2+k_5), \\
    & k_{18} = \tfrac{1}{2}(k_2+k_7), \; &&k_{19} = \tfrac{1}{2}(k_{2}+k_{8}), \; &&k_{20} = \tfrac{1}{2}(k_2+k_9), \; &&k_{21} = \tfrac{1}{2}(k_{3}+k_{4}), \\
    & k_{22} = \tfrac{1}{2}(k_3+k_5), \; &&k_{23} = \tfrac{1}{2}(k_3+k_6), \; &&k_{24} = \tfrac{1}{2}(k_{3}+k_{8}), \; &&k_{25} = \tfrac{1}{2}(k_3+k_9), \\
    & k_{26} = \tfrac{1}{2}(k_4+k_6), \; &&k_{27} = \tfrac{1}{2}(k_4+k_7), \; &&k_{28} = \tfrac{1}{2}(k_{4}+k_{8}), \; &&k_{29} = \tfrac{1}{2}(k_4+k_9), \\
    & k_{30} = \tfrac{1}{2}(k_5+k_8), \; &&k_{31} = \tfrac{1}{2}(k_5+k_9), \; &&k_{32} = \tfrac{1}{2}(k_6+k_8), \; &&k_{33} = \tfrac{1}{2}(k_6+k_9), \\
    & k_{34} = \tfrac{1}{2}(k_7+k_8), \; &&k_{35} = \tfrac{1}{2}(k_7+k_9), \; &&k_{36} = \tfrac{1}{2}(k_8+k_9).
\end{alignat*}
The full set of vertices is illustrated in Figure~\ref{cubic_pyramid_with_vertices}.
In addition, a few of the individual elements of $\mathcal{D} K^{\ast}$ are shown in Figure~\ref{sample_elements_cubic_pyramid}. 
Finally, several couplings between the cubic pyramid and bipentatope elements are shown in Figure~\ref{coupled_elements_cubic_pyramid}.

\begin{figure}[h!]
\begin{center}
\includegraphics[width=9cm]{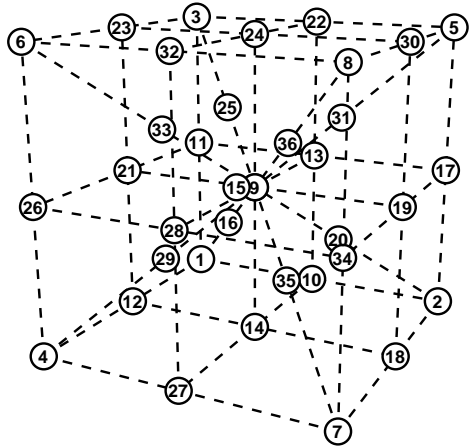}
\end{center}
\caption{The cubic pyramid $K^{\ast}$ and the full set of vertices for the subdivision operator $\mathcal{D}K^{\ast}$.}
\label{cubic_pyramid_with_vertices}
\end{figure}

\begin{figure}[h!]
\begin{center}
\includegraphics[width=9cm]{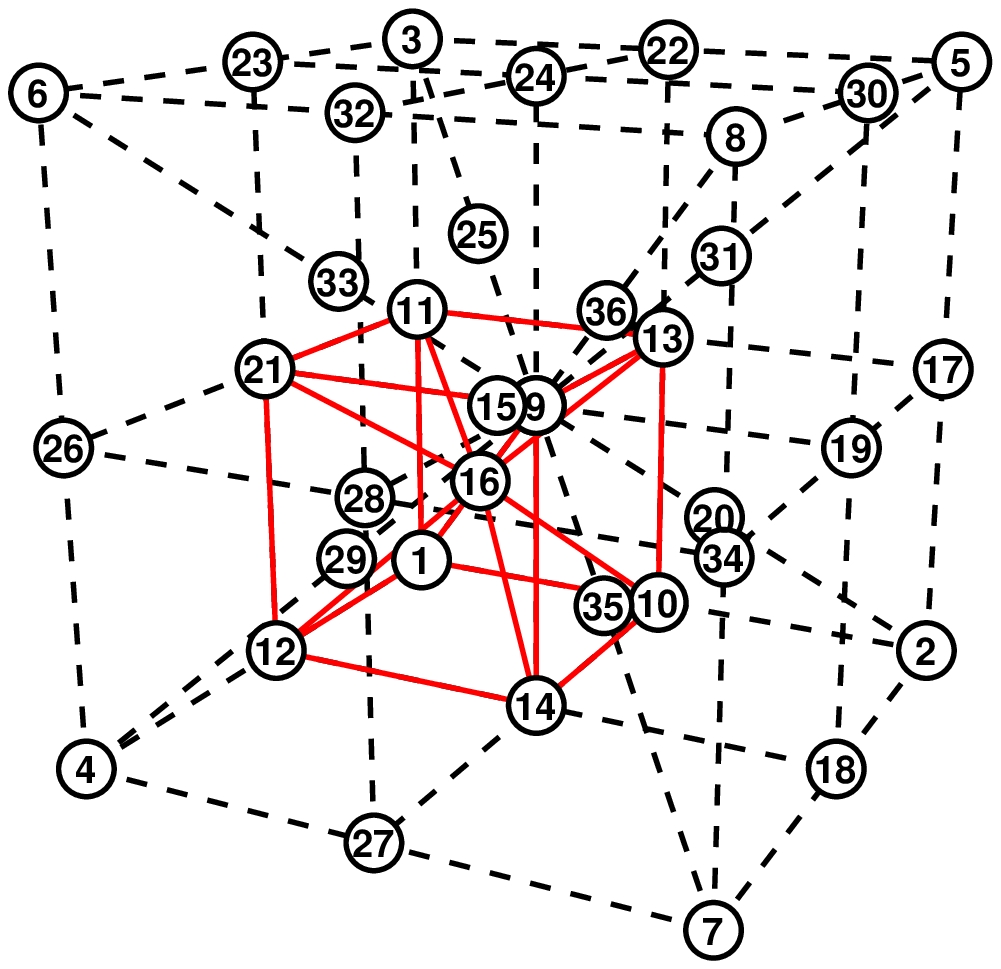}
\includegraphics[width=9cm]{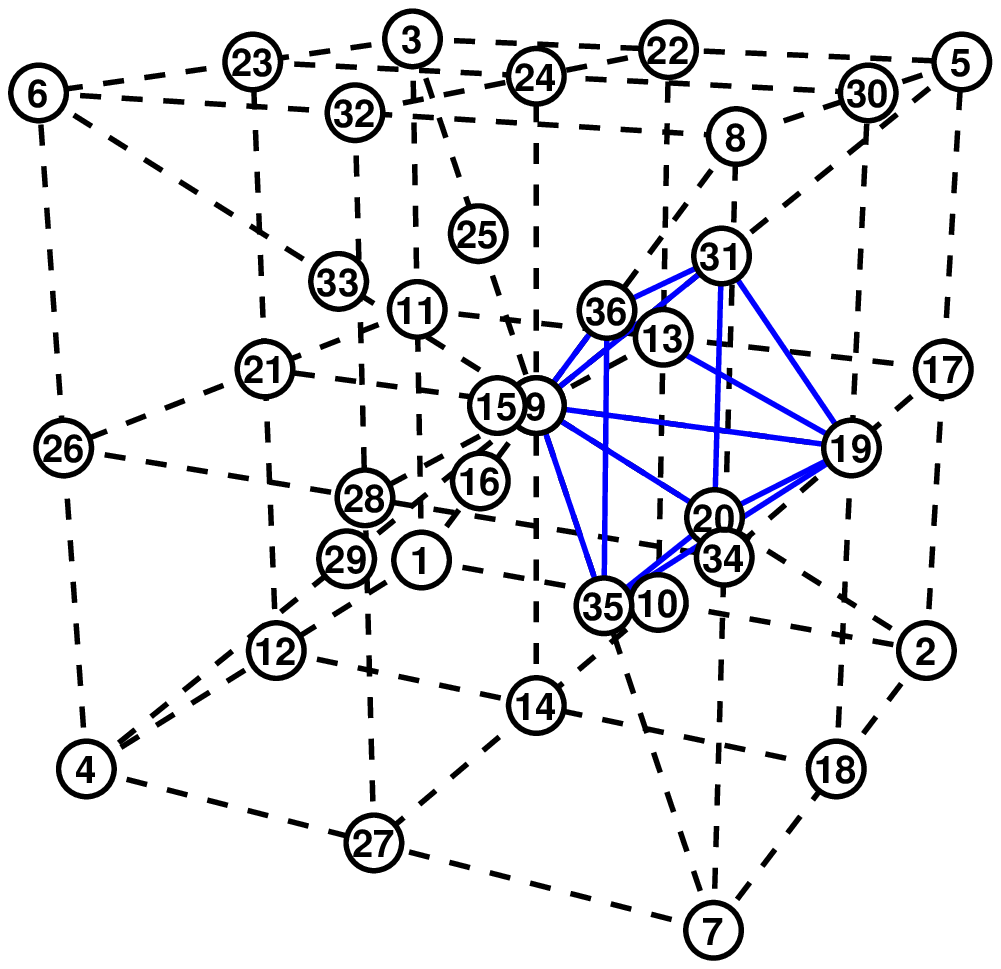}
\end{center}
\caption{The cubic pyramid element $\Ktwo_1$ and the bipentatope element $\Pone_{13}$ of the decomposition $\mathcal{D} K^{\ast}$.}
\label{sample_elements_cubic_pyramid}
\end{figure}

\begin{figure}[h!]
\begin{center}
\includegraphics[width=9cm]{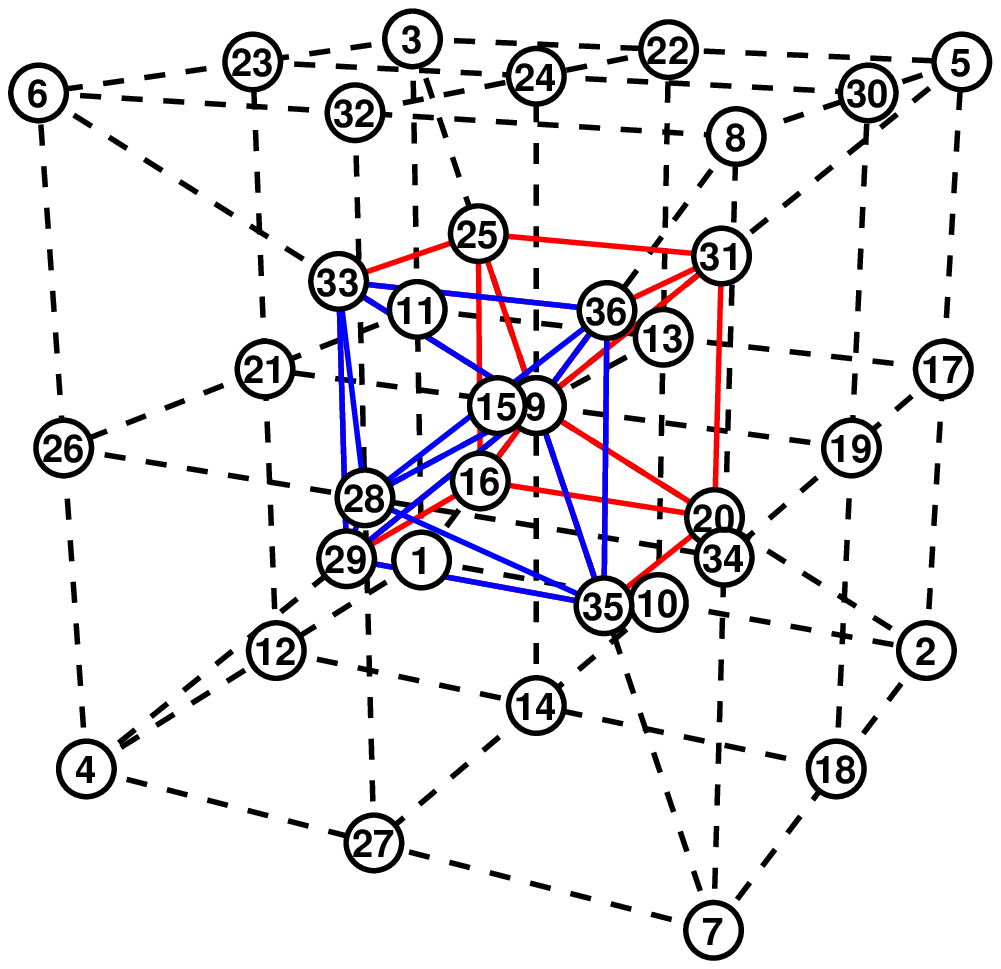}
\includegraphics[width=9cm]{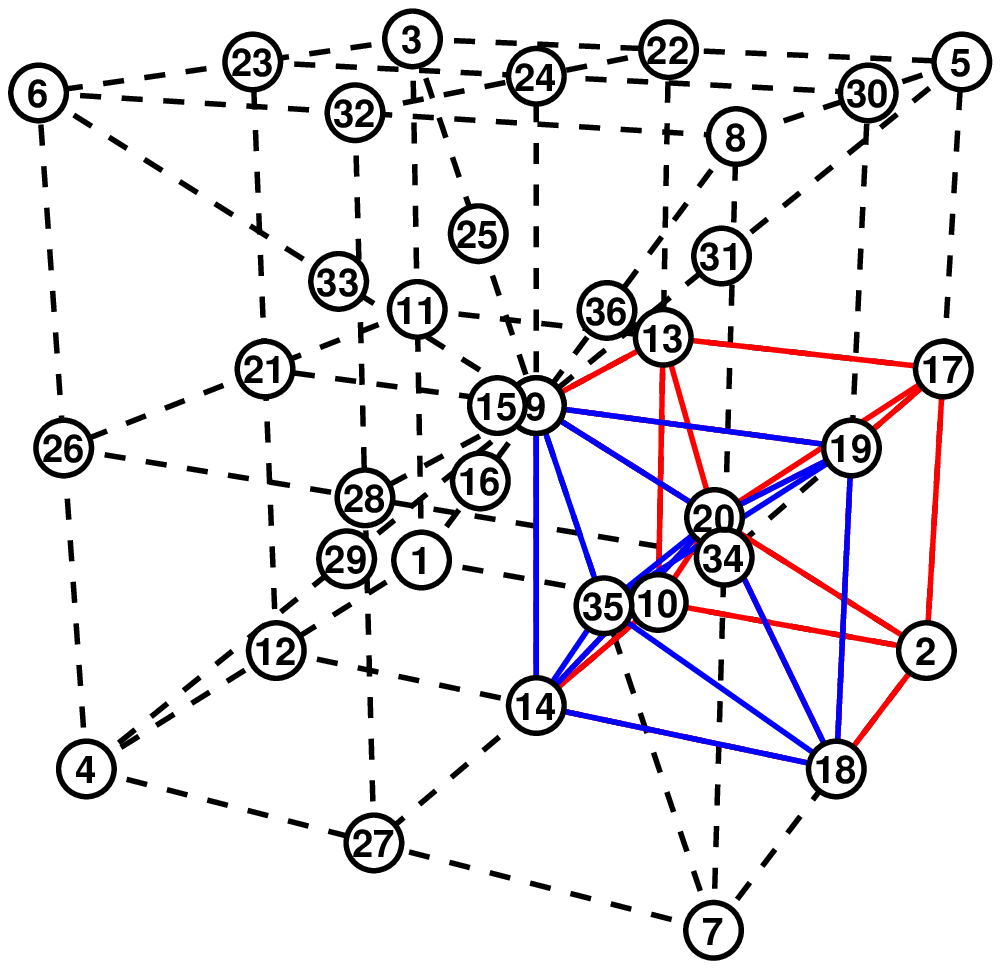}
\end{center}
\caption{Couplings between cubic pyramid and bipentatope elements: $\Ktwo_{10}$ and $\Pone_{14}$ (top) and $\Ktwo_2$ and $\Pone_{10}$ (bottom).} \label{coupled_elements_cubic_pyramid}
\end{figure}

Next, the operator $\mathcal{M}$ can be defined as follows
\begin{align*}
\mathcal{M} P^*=\bigg\{ &\Ptwo_1[p_{5},p_{10},p_{13},p_{16},p_{18},p_{20}],\; \Ptwo_2[p_{6},p_{11},p_{14},p_{17},p_{19},p_{20}],\; \Ptwo_3[p_{8},p_{10},p_{13},p_{16},p_{18},p_{20}], \\
&\Ptwo_4[p_{8},p_{11},p_{14},p_{17},p_{19},p_{20}], \;
\Ptwo_5[p_{8},p_{16},p_{17},p_{18},p_{19},p_{20}],\;
\Ptwo_6[p_{8},p_{13},p_{14},p_{16},p_{17},p_{20}], \\
&\Ptwo_7[p_{8},p_{10},p_{11},p_{18},p_{19},p_{20}], \;
\Ptwo_8[p_{8},p_{10},p_{11},p_{13},p_{14},p_{20}], \;
\Ptwo_9[p_{3},p_{8},p_{12},p_{15},p_{16},p_{17}],\\ &\Ptwo_{10}[p_{4},p_{8},p_{9},p_{15},p_{18},p_{19}], \;
\Ptwo_{11}[p_{1},p_{7},p_{8},p_{9},p_{10},p_{11}], \;
\Ptwo_{12}[p_{2},p_{7},p_{8},p_{12},p_{13},p_{14}], \\
&\Ptwo_{13}[p_{8},p_{15},p_{16},p_{17},p_{18},p_{19}], \;
\Ptwo_{14}[p_{8},p_{12},p_{13},p_{14},p_{16},p_{17}], \;
\Ptwo_{15}[p_{7},p_{8},p_{10},p_{11},p_{13},p_{14}],\\ 
& \Ptwo_{16}[p_{8},p_{9},p_{10},p_{11},p_{18},p_{19}] \bigg\}.
\end{align*}
All the elements $\Ptwo_{j}$, $j = 1, 2, \ldots, 16$, are bipentatopes. 
The first six vertices (above) $p_1, p_2, \ldots, p_6$ are previously defined in the definition of $P^{\ast}$. 
In addition, the remaining vertices $p_7, \ldots, p_{20}$ can be defined as follows
\begin{alignat*}{4}
    &p_7 = \tfrac{1}{2}(p_1+p_2), \qquad &&p_8 = \tfrac{1}{2}(p_1+p_{3}), \qquad &&p_9 = \tfrac{1}{2}(p_1+p_4), \qquad &&p_{10} = \tfrac{1}{2}(p_1+p_5), \\
    &p_{11} = \tfrac{1}{2}(p_1+p_6), \;
    &&p_{12} = \tfrac{1}{2}(p_2+p_3), \;
    &&p_{13} = \tfrac{1}{2}(p_2+p_5), \;
    &&p_{14} = \tfrac{1}{2}(p_2+p_6), \\
    &p_{15} = \tfrac{1}{2}(p_3+p_4), \;
    &&p_{16} = \tfrac{1}{2}(p_3+p_5), \;
    &&p_{17} = \tfrac{1}{2}(p_3+p_6), \;
    &&p_{18} = \tfrac{1}{2}(p_4+p_5), \\
    &p_{19} = \tfrac{1}{2}(p_4+p_6),
    &&p_{20} = \tfrac{1}{2}(p_5+p_6).
\end{alignat*}

The full set of vertices is illustrated in Figure~\ref{bipentatope_with_vertices}.
In addition, a few of the individual elements of $\mathcal{M} P^{\ast}$ are shown in Figure~\ref{sample_elements_bipentatope}. 

\begin{figure}[h!]
\begin{center}
\includegraphics[trim=0cm 0.75cm 0cm 0cm, clip, width=9cm]{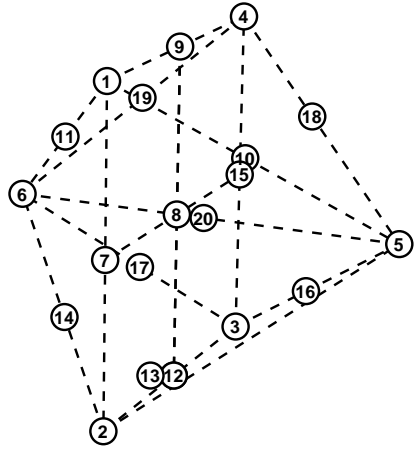}
\end{center}
\caption{The bipentatope $P^{\ast}$ and the full set of vertices for the subdivision operator $\mathcal{M}P^{\ast}$.}
\label{bipentatope_with_vertices}
\end{figure}

\begin{figure}[h!]
\begin{center}
\includegraphics[width=9cm]{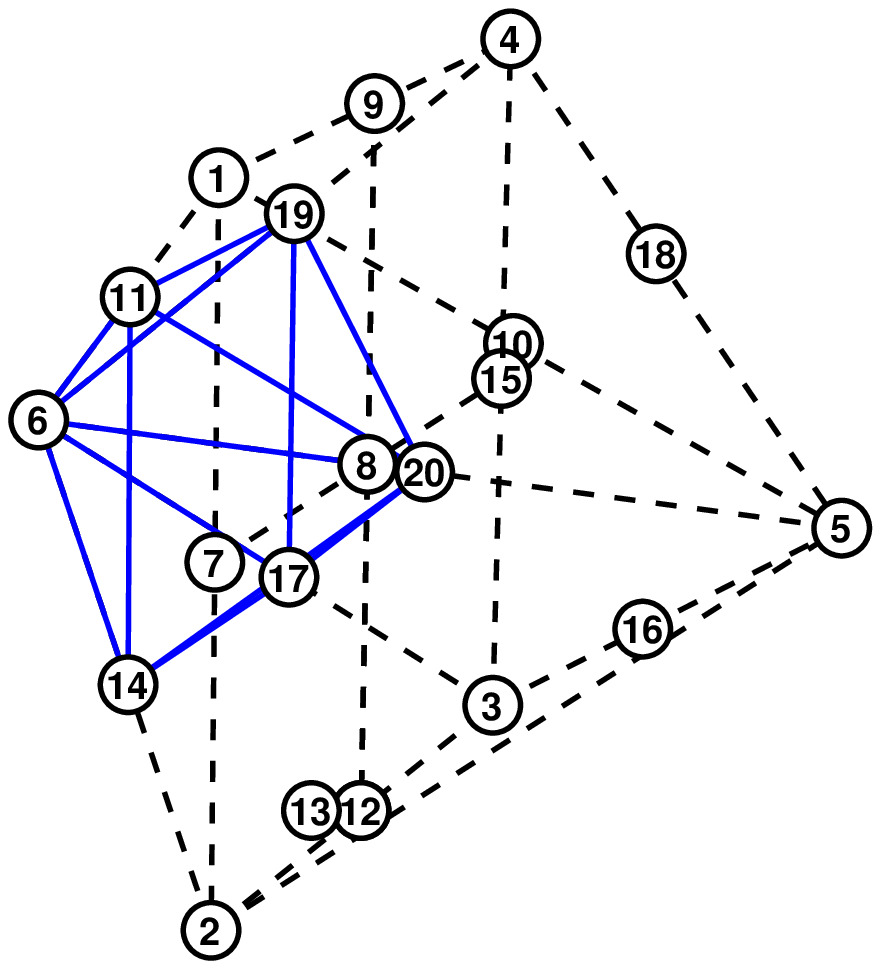}
\includegraphics[width=9cm]{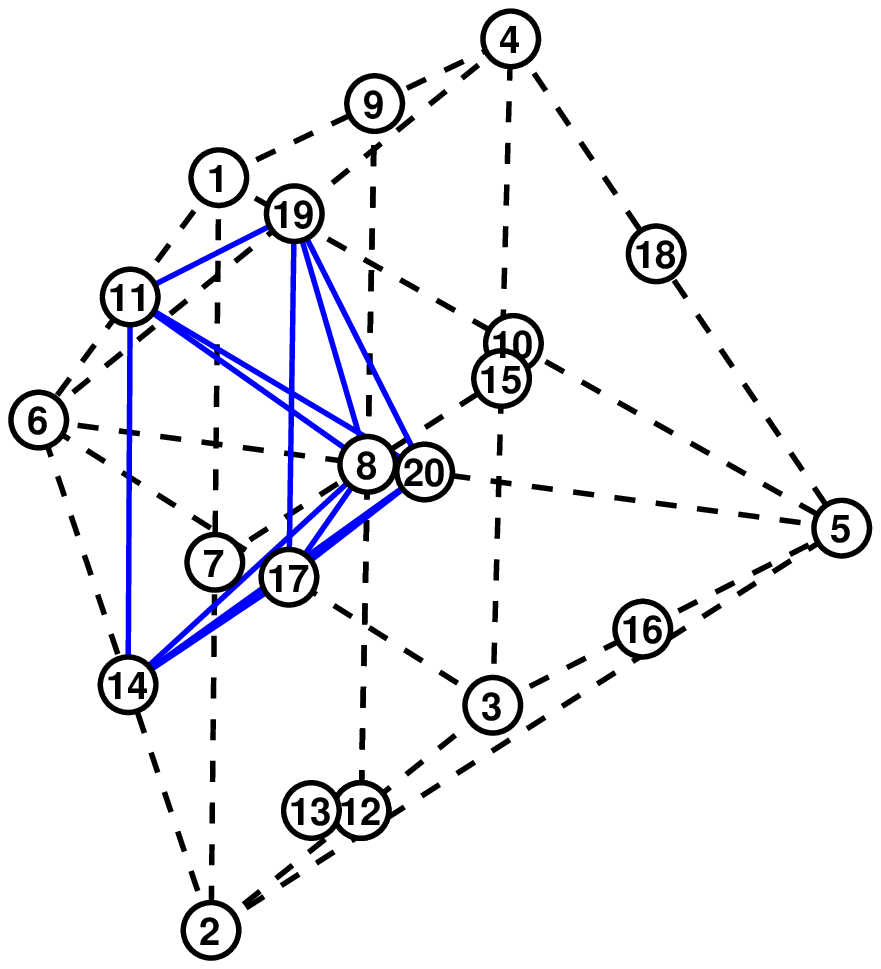}
\includegraphics[width=9cm]{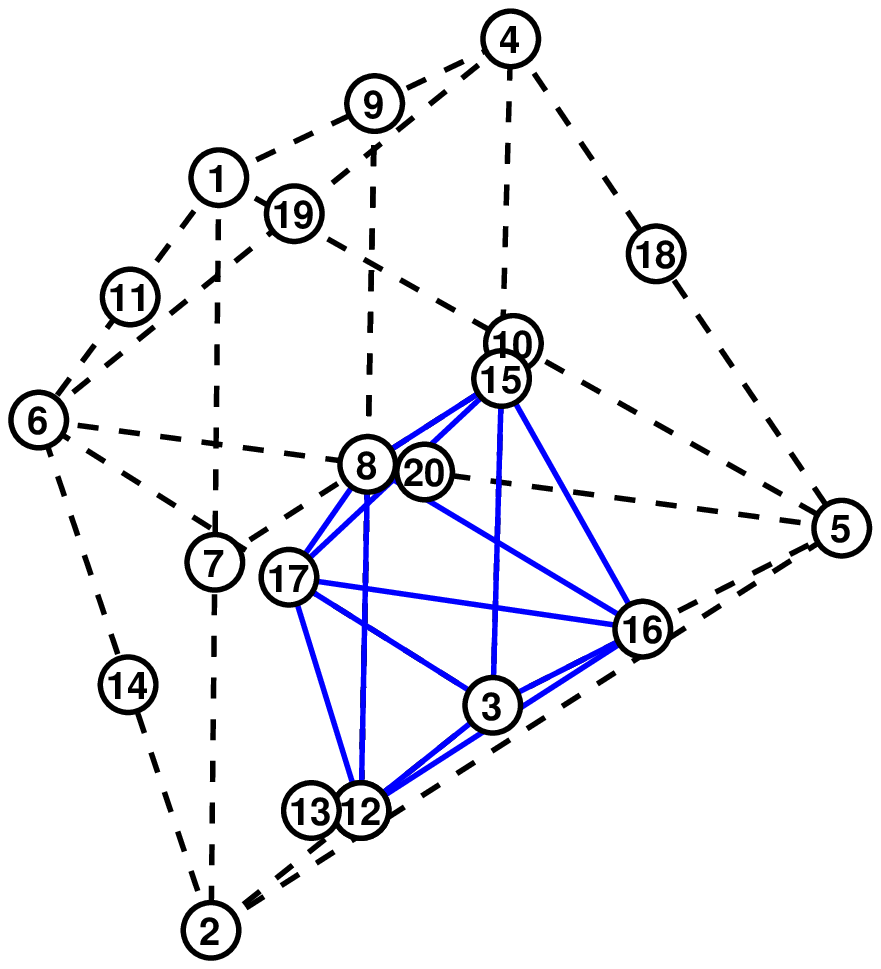}
\end{center}
\caption{The elements $\Ptwo_{2}$, $\Ptwo_{4}$, and $\Ptwo_{9}$ of the decomposition $\mathcal{M} P^{\ast}$.}\label{sample_elements_bipentatope}
\end{figure}

We conclude this section by summarizing the refinement procedure for obtaining hierarchical triangulations below:
\begin{align*}
\tau_0 = \mathcal{R}\Omega,\quad \tau_0=\check{\tau}_0\cup\hat{\tau}_0;
\end{align*}
\begin{align*}
\hat{\tau}_0^* = \mathcal{B}\hat{\tau}_0,
\quad \tau_0^*=\check{\tau}_0\cup\hat{\tau}_0^*; 
\end{align*}
\begin{align*}
\check{\tau}_1=\mathcal{E}\check{\tau}_0,\;
\hat{\tau}_1=\mathcal{L}\hat{\tau}_0^*,
\quad \tau_1=\check{\tau}_1\cup\hat{\tau}_1; 
\end{align*}
\begin{equation}\label{t1}
\check{\tau}_2=\mathcal{E}\check{\tau}_1,\;
\hat{\tau}_2=\mathcal{L}\hat{\tau}_1,
\quad \tau_2=\check{\tau}_2\cup\hat{\tau}_2;
\end{equation}
\begin{align*}
\check{\tau}_{k+1}=\mathcal{E}^{k} \check{\tau}_{1},\;
\hat{\tau}_{k+1}=\mathcal{L}^{k} \hat{\tau}_{1},
\quad \tau_{k+1}=\check{\tau}_{k+1}\cup\hat{\tau}_{k+1}.
\end{align*}
In accordance with ($\ref{t1}$), we can explicitly define the partition operator $\mathcal{H} :\tau_1\rightarrow\tau_2$ as follows
\begin{align*}
\mathcal{H} X=
\begin{cases}
  \mathcal{E}X, & \mbox{if } X \;\mbox{is a tesseract}\\
  \mathcal{L}X, & \mbox{if } X \; \mbox{is a cubic pyramid or bipentatope.}
\end{cases}
\end{align*}
Then
\begin{align*}
\tau_{k+1}=\mathcal{H} ^k\tau_{1}.
\end{align*}
Thus, we have a conforming sequence of successive hybrid finite element triangulations $\left\{\tau_{k}\right\}$.

\pagebreak
\clearpage

\section{Theoretical results}

In this section, we summarize the important theoretical results that govern the hybrid triangulations introduced in the previous section.

\begin{pro}\label{pro}
The hypercube $T^n$ can be divided into $2n$ hypercubic pyramids $\Kone^n_i$ so that all edges of the $i$-th pyramid have the same length.
\end{pro}
\begin{thm}\label{uni}
The four-dimensional space is the only Euclidean space that satisfies Property~1.
\end{thm}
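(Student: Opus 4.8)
The plan is to reduce the statement to a single edge-length comparison inside one canonical pyramid, exploiting the rigidity of the decomposition of Definition~2. First I would fix the reference hypercube $T^n=[-1,1]^n$, with edge length $2$ and centroid at the origin, and observe that its symmetry group acts transitively on the $2n$ facets. Since each canonical pyramid $\Kone^n_i$ is, by Definition~2, the convex hull of one facet together with the centroid, all $2n$ of them are mutually congruent. Hence it suffices to analyze a single representative, say the pyramid whose base is the facet $\{x_n=-1\}$ and whose apex is the origin; if this one is equilateral then so are all the others, and conversely.

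Next I would classify the edges of this representative. As a cone over an $(n-1)$-cube, its $1$-skeleton consists of exactly two families: the \emph{base edges}, which lie inside the facet, and the \emph{lateral edges}, which join the apex to the $2^{n-1}$ base vertices. Every base edge is an edge of $T^n$ and therefore has length $2$. Every base vertex is a vertex of $T^n$, so its distance from the centroid is $\sqrt{n}$; thus every lateral edge has length $\sqrt{n}$. Consequently the canonical pyramid is equilateral precisely when $2=\sqrt{n}$, that is, when $n=4$. This single identity is the entire arithmetic content of the theorem.

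It then remains to assemble the two directions. For existence, substituting $n=4$ yields $\sqrt{n}=2$, so all base and lateral edges coincide and Property~1 holds in $\mathbb{R}^4$. For uniqueness, the identity $2=\sqrt{n}$ fails for every $n\neq 4$, so the base and lateral edges already differ in length and no canonical pyramid in $\mathbb{R}^n$ can be equilateral; this disposes of all the remaining Euclidean spaces.

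The step I expect to be the main obstacle is justifying that the phrase ``divided into $2n$ hypercubic pyramids'' really forces the canonical facet-to-centroid configuration, rather than some exotic tiling by differently placed $(n-1)$-cube-based pyramids that might rescue another dimension. Under the literal reading of Definition~2 this is automatic, since the $\Kone^n_i$ are fixed; but to make the uniqueness claim airtight I would supply a short volume-and-incidence argument. If $2n$ pyramids with $(n-1)$-cube bases of edge $a$ partition the cube of volume $a^n$, then summing the pyramidal volumes $\tfrac{1}{n}a^{n-1}h_i$ forces the common apex height $a/2$, which pins the apex to the centroid and identifies the bases with the $2n$ facets. With that rigidity in hand, the edge comparison above becomes decisive in every dimension.
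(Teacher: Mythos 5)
Your proof is correct and takes essentially the same route as the paper's: both reduce Property~1 to the single identity (lateral edge length) $=$ (base edge length), i.e.\ half the hypercube diameter equals the edge length, which gives $\sqrt{n}=2$ and hence $n=4$. The extra remarks on transitivity of the symmetry group and on the rigidity of the decomposition are harmless additions that the paper simply takes for granted via its Definition~2.
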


\begin{proof}
For the sake of simplicity, we suppose that
\begin{align*}
T^n=\left[
t_1^n(-a,-a,\ldots,-a),\ldots
t_{2^n}^n(a,a,\ldots,a)\right]
\end{align*}
has all edges parallel to the coordinate axes. Then, Property~\ref{pro} is valid iff
\begin{align*}
\frac{1}{2}h[T^n]=l[T^n],
\end{align*}
where $l[T^n]$ is the edge length of $T^n$. The latter equality leads to
\begin{align*}
\frac{1}{2}\sqrt{\sum_{k=1}^n \left(2a\right)^2}= 2a\Rightarrow \sqrt{na^2}=2a,
\end{align*}
which is possible iff $n=4$. \qed
\end{proof}

The result of Theorem~\ref{uni} is that the four-dimensional space is unique among all Euclidean spaces.



\begin{thm}\label{inv}
Consider the cubic pyramids in $\mathcal{D} K^{\ast}$.
All such pyramids are congruent to the reference pyramid, i.e.~$\Ktwo_i\subset[K^*],$ $i=1,2,\ldots,10$.
\end{thm}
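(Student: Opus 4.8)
The plan is to read off the congruence directly from the vertex coordinates, exploiting that the notion of congruence introduced above permits a nonzero scaling factor $c$; it therefore suffices to exhibit, for each index $i$, a similarity $\underline{b}_i + c_i Q_i$ carrying $K^{\ast}$ onto $\Ktwo_i$. First I would substitute the coordinates of $k_1,\ldots,k_9$ into the midpoint formulas to obtain explicit values for $k_{10},\ldots,k_{36}$ and collect them in a table. A brief inspection shows that these are the edge, face-diagonal, space-diagonal, and lateral-edge midpoints of $K^{\ast}$; in particular $k_{15}=(0,0,0,-1)$ is the centroid of the cubic base, while $k_{16},k_{20},k_{25},k_{29},k_{31},k_{33},k_{35},k_{36}$ are the eight points $(\pm\tfrac12,\pm\tfrac12,\pm\tfrac12,-\tfrac12)$.

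The central observation I would then isolate is uniform across all ten pyramids: each $\Ktwo_i$ has eight ``base'' vertices sharing one value of $x_4$ together with a single ``apex'' vertex lying off that hyperplane. Reading off the coordinates, the eight base vertices of each $\Ktwo_i$ form a set $\{\gamma_1,\gamma_1+1\}\times\{\gamma_2,\gamma_2+1\}\times\{\gamma_3,\gamma_3+1\}\times\{\eta\}$ for suitable constants $\gamma_1,\gamma_2,\gamma_3,\eta$; this is manifestly an axis-parallel cube of edge length $1$ with centroid $(\gamma_1+\tfrac12,\gamma_2+\tfrac12,\gamma_3+\tfrac12,\eta)$, and the apex sits directly over or under this centroid at signed height $\pm\tfrac12$ along the $x_4$-axis. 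Concretely, for $i=1,\ldots,8$ the base cubes are centered at the octant centers $(\pm\tfrac12,\pm\tfrac12,\pm\tfrac12,-1)$ with apex in the $+x_4$ direction; for $i=9$ the base is centered at $(0,0,0,-\tfrac12)$ with apex $k_9$ in the $+x_4$ direction; and for $i=10$ the base is that same mid-level cube but the apex $k_{15}$ points in the $-x_4$ direction.

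Because the base edge equals $1$ and the apex height equals $\tfrac12$, each $\Ktwo_i$ is precisely a half-scale canonical cubic pyramid, matching $K^{\ast}$ (whose base edge is $2$ and apex height is $1$). I would then write the similarities explicitly: for $i=1,\ldots,9$ one has $\Ktwo_i=\underline{b}_i+\tfrac12 K^{\ast}$ with $Q_i=I$ and $\underline{b}_i$ the apex of $\Ktwo_i$ (the origin when $i=9$), while for $i=10$ one has $\Ktwo_{10}=(0,0,0,-1)+\tfrac12\,\mathrm{diag}(1,1,1,-1)\,K^{\ast}$, the reflection accounting for the reversed apex direction. Each map has the form $\underline{b}+cQ$ with $c=\tfrac12\neq 0$ and $Q$ orthogonal, and carries the vertex set of $K^{\ast}$ onto that of $\Ktwo_i$; hence $\Ktwo_i$ belongs to $[K^{\ast}]$ for every $i$, which is the assertion.

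The main obstacle is conceptual rather than computational. One is tempted to argue abstractly that each $\Ktwo_i$ is a canonical pyramid because all of its edges have equal length; but equal edge lengths do not force the eight base vertices to be a genuine cube, since an equilateral rhombohedron is not congruent to a cube. This is precisely why I would rely on the explicit coordinate table, which reveals the bases to be axis-parallel and therefore cubes without any separate orthogonality check. The remaining effort is pure bookkeeping: within each unordered vertex list one must correctly identify the eight base vertices lying in a common hyperplane $x_4=\mathrm{const}$ and the off-plane apex, and confirm that the apex lies over the centroid rather than merely at the correct height. With the table in hand, the ten verifications are immediate and identical in form.
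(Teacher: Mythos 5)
Your proposal is correct, and your coordinate bookkeeping checks out: the eight non-apex vertices of each $\Ktwo_i$ do form an axis-parallel unit cube in a hyperplane $x_4=\mathrm{const}$ (the octant cubes at level $x_4=-1$ for $i=1,\ldots,8$, and the cube on $(\pm\tfrac12,\pm\tfrac12,\pm\tfrac12,-\tfrac12)$ for $i=9,10$), with the apex over or under the centroid at height $\tfrac12$, so each map $\underline{b}_i+\tfrac12 Q_i$ you write down genuinely carries $K^{\ast}$ onto $\Ktwo_i$. The route differs from the paper's in logical structure, though both are coordinate computations. The paper verifies three invariants of each $\Ktwo_i$ --- all edges equal to $1$, the base is a unit cube, and the degeneracy measure equals $4.18154$ --- and asserts that congruence follows; this implicitly uses the fact that a cubic base together with equal lateral edges forces the apex onto the perpendicular through the base centroid at a determined height, and the degeneracy check is then redundant. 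You instead construct the similarity explicitly ($Q_i=I$ with $c=\tfrac12$ for $i=1,\ldots,9$, and $Q_{10}=\mathrm{diag}(1,1,1,-1)$ for the downward-pointing $\Ktwo_{10}$), which is more self-contained and matches the paper's definition of congruence directly, at the cost of having to identify base versus apex within each unordered vertex list. Your remark that equal edge lengths alone do not force a cubic base (an equilateral rhombohedron is a counterexample) is precisely the gap that the paper's item (ii) is inserted to close, so your caution is well placed rather than superfluous.
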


\begin{proof}
For all elements $\Ktwo_i$ $i=1,2,\ldots10,$ of  $\mathcal{D} K^*$ it is possible to verify through direct calculation that:

\begin{enumerate}[label=(\roman*)]
\item all edges of $\Ktwo_i$ are equal to $1$;
\item the base of $\Ktwo_i$ is a cube with edges equal to $1$;
\item the degeneracy measure of $\Ktwo_i$ is $\delta(\Ktwo_i)= 4.18154$;
\end{enumerate}
from which congruency with the reference pyramid follows. \qed
\end{proof}

\begin{thm}\label{con}
Consider the bipentatopes in $\mathcal{D} K^{\ast}$.
All such bipentatopes are congruent.
\end{thm}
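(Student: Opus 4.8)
The plan is to establish congruence exactly in the spirit of the proof of Theorem~\ref{inv}, by reducing the statement to an equality of the full matrices of pairwise vertex distances and invoking the elementary fact from distance geometry that a finite point set in $\mathbb{R}^4$ is determined, up to an isometry, by its pairwise distances. Combined with the nonzero scaling factor $c$ permitted in the congruence relation introduced earlier, equality of the distance matrices up to a common factor then yields congruence in the required sense. Since all eighteen elements $\Pone_j$ arise from bisecting the edges of $K^{\ast}$, they share a common scale, so one expects $c=1$ throughout, i.e.\ direct congruence and in fact congruence with $P^{\ast}$.

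Concretely, I would first write out explicit coordinates for the auxiliary vertices $k_{10},\ldots,k_{36}$: by definition each is the midpoint of a pair drawn from $k_1,\ldots,k_9$, whose coordinates are fixed in the definition of $K^{\ast}$, so this step is purely mechanical. For each bipentatope $\Pone_j=[k_{a_1},\ldots,k_{a_6}]$ I would then assemble the six vertex coordinates and the fifteen squared pairwise distances. A short inspection of the reference element shows that a bipentatope of this family consists of a unit square $2$-face together with the segment joining its two remaining vertices, every segment-to-square distance equalling the common edge length and the two square diagonals being the only longer distances; thus each $\Pone_j$ should have thirteen edges of one common length and exactly two face-diagonals of length $\sqrt{2}$ times that edge. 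The essential step is to relabel the six vertices of every $\Pone_j$ in a single consistent way---the four square vertices in cyclic order, then the two segment vertices---so that the resulting $6\times 6$ distance matrix is literally the same matrix for all $j$, and equal to that of $P^{\ast}$. Once this labelling is fixed, congruence of all the $\Pone_j$ follows immediately.

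To tame the casework I would exploit the order-$48$ symmetry group of $K^{\ast}$---the symmetries of its cubic base $[k_1,\ldots,k_8]$ that fix the apex $k_9$---which acts on the eighteen bipentatopes by isometries and hence permutes them into a small number of orbits; it then suffices to compute the distance matrix for one representative per orbit and to transport the labelling along the group action. The main obstacle is precisely this bookkeeping: the crux is to choose, for each $\Pone_j$, the vertex ordering that makes the distance \emph{matrices} coincide, not merely the unordered \emph{multisets} of entries (thirteen copies of the edge length and two of $\sqrt{2}$ times it), since agreement of multisets alone would not force congruence. A secondary point, also immediate from the explicit coordinates, is to confirm that each $\Pone_j$ is nondegenerate and genuinely of the stated combinatorial type---that the four designated vertices really span a square and that the configuration has positive $4$-volume, equivalently a nonvanishing Cayley--Menger determinant---so that the distance data indeed pins down a congruent copy of one and the same polytope.
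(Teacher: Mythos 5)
Your approach is sound and genuinely different from the paper's. The paper proves Theorem~\ref{con} by splitting each bipentatope into its two constituent pentatopes, writing the affine maps $F_{10}, F_{14}$ that carry the cube corner $\mathcal{S}^{\ast}$ onto one pentatope of $\Pone_{10}$ and of $\Pone_{14}$ respectively, and checking that the transition matrix $A_{14,10}$ of $F_{10}\circ F_{14}^{-1}$ is orthogonal and that the map carries the remaining apex correctly; this is done explicitly for the single pair $(\Pone_{10},\Pone_{14})$, with the other pairs asserted to follow by repetition. You instead compare full labeled distance matrices and invoke the rigidity of labeled point configurations in $\mathbb{R}^{4}$ under isometry, checking all eighteen elements against one template at once. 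Your structural claim is correct (each $\Pone_j$ is a unit square plus a unit segment with all cross distances equal to $1$ and only the two square diagonals equal to $\sqrt{2}$; e.g.\ $\Pone_{10}$ has square $k_{14}k_{15}k_{19}k_{18}$ and segment $k_{20}k_{35}$), and you are right, and more careful than the paper, on two points it glosses over: that the distance \emph{matrices} under a fixed labeling, not merely the multisets, must agree, and that congruence of the vertex sets transfers to the solids only because each bipentatope is the (nondegenerate, convex) hull of its six vertices. What the paper's route buys is the explicit isometry, which is reused in the subsequent Remark and in the proof of Theorem~\ref{subd}; what yours buys is uniformity and automatic transitivity, avoiding the construction of $2\times 18$ affine maps. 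Two caveats: your claim that the order-$48$ symmetry group of $K^{\ast}$ permutes the eighteen bipentatopes is plausible but itself unverified, since $\mathcal{D}K^{\ast}$ is given only as an explicit list; treat it as an optional labor-saving device, since the direct check of all eighteen matrices suffices. And note the paper's theorem asserts only mutual congruence of the $\Pone_j$, whereas you additionally conclude congruence with $P^{\ast}$ with $c=1$; that stronger statement is true and consistent with the paper's Remark, but it is extra content you would need to verify against the reference coordinates of $P^{\ast}$.
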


\begin{proof}
Without loss of generality, we consider demonstrating this property for two bipentatopes chosen from $\mathcal{D} K^*$, with the claim following from repeating this procedure for all pairwise combinations.  With this in mind, we take our two bipentatopes chosen from $\mathcal{D} K^*$ to be $\Pone_{10}$ and $\Pone_{14}$, see Figure \ref{coupled_elements_cubic_pyramid}.
Any bipentatope can be partitioned into
two pentatopes as shown in Figure~\ref{split_bipentatope}.
We hence begin by dividing the bipentatopes $\Pone_{10}$ and $\Pone_{14}$ into pentatopes whence
\begin{align*}
\Pone_{10}=\hat S_{10}
\left[p_{15},p_{18},p_{19},p_{20},p_{35}\right]\cup
\check{S}_{10}\left[p_{14},p_{15},p_{18},p_{20},p_{35}\right], \\
\Pone_{14}=\hat S_{14}
\left[p_{15},p_{28},p_{29},p_{33},p_{36}\right]\cup
\check{S}_{14}\left[p_{15},p_{28},p_{29},p_{35},p_{36}\right].
\end{align*}

Let the generic affine transformations of the pentatopes $\hat S_{10}$ and $\hat S_{14}$ be
\begin{align*}
F_{10}\;:\;\hat S_{10}=A_{10}\mathcal{S}^*+B_{10},\quad
F_{14}\;:\;\hat S_{14}=A_{14}\mathcal{S}^*+B_{14}.
\end{align*}
We map $\hat S_{14}$ onto $\hat S_{10}$ by
\begin{align*}
\hat S_{10}=F_{10}\left(F_{14}^{-1}\left(\hat S_{14}\right)\right)
\Leftrightarrow
F_{14,10}\;:\;\hat S_{10}=A_{14,10}\hat S_{14}+B_{10}-A_{14,10}B_{14},
\end{align*}
where $A_{14,10}$ is the transitional matrix from $\hat S_{14}$ to
$\hat S_{10}$ and $F_{14,10}=F_{10}\circ F_{14}^{-1}$.
We emphasize that $p_{14}=F_{14,10}(p_{35})$, which assures
that
$
\Pone_{10}=F_{14,10}\left(\Pone_{14}\right)$.
The claim that $\Pone_{10}\cong \Pone_{14}$ follows from the fact that the transition matrix
\begin{align*}
A_{14,10}=\left(
\begin{array}{cccc}
 -1 & 0 & 0 & 0 \\
 0 & 0 & 1 & 0 \\
 0 & -1 & 0 & 0 \\
 0 & 0 & 0 & -1 \\
\end{array}
\right)
\end{align*}
is orthogonal. \qed
\end{proof}

\begin{rem}

Let all bipentatopes from $\mathcal{D} K^*$ be partitioned into
two pentatopes as it is shown in Figure~\ref{split_bipentatope}.
Then all of them belong to $[\hat S_{10}]$ and
$\delta\left([\hat S_{10}]\right)=5.$ Moreover,
the elements of $[\hat S_{10}]$ are invariant. For more details concerning this class of pentatopes, we refer the reader to~\cite{petrov2020properties}.
\end{rem}


\begin{thm}\label{subd}
Consider the bipentatopes in $\mathcal{M} P^{\ast}$.
All such bipentatopes are congruent to the reference bipentatope, i.e.~$\Ptwo_j\subset[P^{\ast}],$ $j=1,2,\ldots,16$.
\end{thm}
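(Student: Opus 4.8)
The plan is to mirror the strategy already used for Theorem~\ref{con}, which established congruence \emph{among} the bipentatopes in $\mathcal{D}K^{\ast}$, but now the stronger goal is congruence of every $\Ptwo_j$ to the \emph{reference} bipentatope $P^{\ast}$ itself. The natural route is to exhibit, for each $j$, an explicit rigid motion (an orthogonal matrix together with a scaling and a translation) carrying $P^{\ast}$ onto $\Ptwo_j$. Since all the defining midpoint vertices $p_7,\ldots,p_{20}$ are obtained by halving edges of $P^{\ast}$, I expect a common scaling factor $c = \tfrac{1}{2}$ to appear throughout, consistent with the refinement dividing each edge into two parts.

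First I would invoke the decomposition of each bipentatope into two pentatopes, exactly as in Figure~\ref{split_bipentatope} and as done for $\Pone_{10}$ and $\Pone_{14}$ in the proof of Theorem~\ref{con}. Reducing to pentatopes is convenient because a pentatope is a simplex: an affine map is pinned down by the images of its five vertices, so I can write each half-pentatope of $\Ptwo_j$ as an affine image $A_j\mathcal{S}^{\ast}+B_j$ of the cube-corner reference simplex and read off $A_j$ and $B_j$ directly from the vertex coordinates listed after the definition of $\mathcal{M}$. Composing with the corresponding map for $P^{\ast}$ (split into its own two pentatopes) yields a candidate transition map $F_j$ from $P^{\ast}$ to $\Ptwo_j$, and congruence then reduces to checking that the linear part of $F_j$, after factoring out the scaling $c=\tfrac12$, is an orthogonal matrix.

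Rather than verifying the two half-pentatopes separately and then arguing that they glue consistently, a cleaner and more uniform approach is to work directly with the six vertices of each $\Ptwo_j$: compute all pairwise edge lengths and the induced metric structure, and show by direct calculation that every $\Ptwo_j$ has the same edge-length signature (a half-scaled copy of that of $P^{\ast}$), the same two apex vertices playing the role of $p_5,p_6$, and the same tetrahedral ``equatorial'' cross-section playing the role of $[p_1,p_2,p_3,p_4]$. Because a bipentatope is determined up to congruence by this combinatorial-metric data, matching it across all sixteen elements establishes $\Ptwo_j\cong P^{\ast}$. In practice this is the same ``verify through direct calculation'' device already employed in the proof of Theorem~\ref{inv}.

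The main obstacle is bookkeeping rather than conceptual depth: there are sixteen elements $\Ptwo_1,\ldots,\Ptwo_{16}$, and one must confirm that each recovers the \emph{same} congruence class, not merely that they are mutually congruent. The subtle point is orientation — the apex pair and the equatorial tetrahedron must be matched correctly so that the resulting linear map is genuinely orthogonal and not merely a shape-preserving affine map; isolating which two of the six vertices serve as the bipyramid apexes for each $\Ptwo_j$ is the step most likely to require care. I would handle this by identifying, in each $\Ptwo_j$, the two vertices joined to all four others (the apexes) and confirming the remaining four form a regular-enough tetrahedron congruent to $[p_1,p_2,p_3,p_4]$, after which orthogonality of the transition matrix follows as in Theorem~\ref{con}.
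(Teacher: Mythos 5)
Your first route---splitting each bipentatope into two pentatopes, writing each half as an affine image of the cube corner $\mathcal{S}^{\ast}$, and checking that the transition map has an orthogonal linear part up to a scale factor---is essentially the paper's proof. The paper carries this out for the single pair $\Ptwo_{2},\Ptwo_{4}$ (obtaining the orthogonal matrix $A_{2,4}$ and verifying the gluing by noting that $F_{2,4}$ fixes the common facet $\Ptwo_{2,-1,-6}$, which is exactly the consistency issue you flagged), then appeals to the equal edge lengths to cover the remaining elements, and finally connects to the reference element by observing that the transition matrix from $\Ptwo_{2}$ to $P^{\ast}$ is $2I$, i.e.\ your scaling $c=\tfrac{1}{2}$ with $Q=I$.

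The alternative route you say you prefer, however, rests on a wrong picture of $P^{\ast}$, and this is a genuine gap. The four vertices $p_1,p_2,p_3,p_4$ do \emph{not} form a tetrahedral equatorial cross-section: since $p_3=p_2+p_4-p_1$, they are affinely dependent and form a planar unit square, so $p_5,p_6$ cannot be the apex pair of the bipyramid. The decomposition the paper actually uses (read off from the split of $\Ptwo_{2}$ under the half-scaling $x\mapsto\tfrac{1}{2}(x+p_6)$) takes $[p_1,p_3,p_5,p_6]$ as the shared tetrahedron and $p_2,p_4$ as the two apexes. Your device for locating the apexes---``the two vertices joined to all four others''---also fails here: the only vertex pairs of $P^{\ast}$ that are not edges are the two square diagonals $[p_1,p_3]$ and $[p_2,p_4]$ (each of length $\sqrt{2}$, crossing at their common midpoint), so four of the six vertices have degree four and the two degree-five vertices are $p_5,p_6$, which are precisely \emph{not} the apex pair. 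The underlying idea of matching the full set of fifteen pairwise distances up to a common scale factor would indeed establish congruence, but only once the vertex correspondence is set up correctly, and your apex/equator heuristic would set it up wrongly. Either repair that correspondence or fall back on your first route, which is the one the paper takes.
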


\begin{proof}
Firstly, we prove that all bipentatopes in $\mathcal{M} P^*$ belong to the same class.
We randomly choose two bipentatopes
$\Ptwo_{2}$ and $\Ptwo_{4}$ (see Figure~\ref{sample_elements_bipentatope}) from $\mathcal{M} P^*$.
The bipentatope $\Ptwo_{2}$ is obtained by the coupling of the pentatopes
\begin{align*}
\hat S_{2}
\left[p_{6},p_{11},p_{14},p_{17},p_{20}\right] \quad \mbox{and} \quad
\check{S}_{2}\left[p_{6},p_{11},p_{17},p_{19},p_{20}\right].
\end{align*}
The second bipentatope is partitioned as follows
\begin{align*}
\Ptwo_{4}=\hat S_{4}
\left[p_{8},p_{11},p_{14},p_{17},p_{20}\right]\cup
\check{S}_{4}\left[p_{8},p_{11},p_{17},p_{19},p_{20}\right].
\end{align*}
The pentatope $\hat S_{2}$ is mapped onto $\hat S_{4}$ by the affine transformation
\begin{equation*}
\hat S_{4}=F_{4}\left(F_{2}^{-1}\left(\hat S_{2}\right)\right),
\end{equation*}
where
\begin{align*}
F_{2}\;:\;\hat S_{2}=A_{2}\mathcal{S} ^*+B_{2}, \quad
F_{4}\;:\;\hat S_{4}=A_{4}\mathcal{S} ^*+B_{4}.
\end{align*}
We present the map $F_{2,4}=F_{4}\circ F_{2}^{-1}$
in a matrix form by
\begin{align*}
F_{2,4}\;:\;\hat S_{4}=A_{2,4}\hat S_{2}+B_{4}-A_{2,4}B_{4}.
\end{align*}
The transitional matrix
\begin{align*}
A_{2,4}=\left(
\begin{array}{cccc}
 1 & 0 & 0 & 0 \\
 0 & 0 & 0 & -1 \\
 0 & 0 & 1 & 0 \\
 0 & -1 & 0 & 0 \\
\end{array}
\right)
\end{align*}
is orthogonal. Additionally, $F_{2,4}$ keeps the facet $\Ptwo_{2,-1,-6}$ invariant, which assures $\Ptwo_{4}=F_{2,4}\left(\Ptwo_{2}\right)$. Therefore, $\Ptwo_{2}\cong \Ptwo_{4}$.

On the other hand, all edges of the bipentatopes have the same length equal to $\frac{1}{2}$, i.e. they are identical.

It remains to prove that $\Ptwo_{2}$ belongs to $[P^*]$.
This follows directly from
$A_{2,*}=2I$, where $A_{2,*}$ is the transitional matrix from $\Ptwo_{2}$ to $P^*$ and $I$ is the identity matrix.
\qed
\end{proof}



\section{Quadrature rules for the cubic pyramid}

\subsection{Methodology}

In this section, we consider the problem of approximating the integral of a function $f(\boldsymbol{x})$ on $K^{\ast}$, the reference cubic pyramid.  This can be effectively accomplished via a quadrature rule whence
\begin{equation}\label{eq:moment}
    \int_{-1}^{0} \int_{x_4}^{-x_4} \int_{x_4}^{-x_4} \int_{x_4}^{-x_4} f(\boldsymbol{x}) \,\mathrm{d}x_1 \, \mathrm{d}x_2 \, \mathrm{d}x_3 \, \mathrm{d}x_4 \approx \sum_{j=1}^N \omega_j f(\boldsymbol{x}_j),
\end{equation}
where $\{\omega_j\}$ are a set of $N$ weights and $\{\boldsymbol{x}_j\}$ are an associated set of $N$ points---known as abscissa---which together specify a \emph{quadrature rule}.  If a rule is capable of exactly computing the integrals of a generic constant and all polynomials in the set
\[
 \Xi(p) = \{ \psi_{ijkq}(\boldsymbol{x}) \mid  0 < i + j + k + q \le p \text{ and } i, j, k, q \ge 0 \},
\]
where $\psi_{ijkq}(\boldsymbol{x})$ is as given in Eq.~\eqref{eq:cubpb}, then the rule is said to be of strength $p$.  When defining the abscissa of a rule, it is typical to constrain them to be strictly inside of our domain and be arranged symmetrically.  This ensures that we never evaluate $f(\boldsymbol{x})$ outside of its domain.  In addition, to reduce the likelihood of catastrophic cancellation it is also customary to require the weights to be positive.  Evidently if the rule is to integrate a generic constant mode correctly it must necessarily be the case that
\[
 \sum_{j=1}^N \omega_j = 2.
\]

Substituting the polynomials from our set into Eq. \eqref{eq:moment} and enforcing equality we require for all $\psi \in \Xi(p)$ that 
\[
 \int_{-1}^{0} \int_{x_4}^{-x_4} \int_{x_4}^{-x_4} \int_{x_4}^{-x_4} \psi(\boldsymbol{x}) \,\mathrm{d}x_1 \, \mathrm{d}x_2 \, \mathrm{d}x_3 \, \mathrm{d}x_4 = 0 = \sum_{j=1}^N \omega_j \psi(\boldsymbol{x}_j),
\]
where in evaluating the integral we have exploited the orthogonality relationship associated with the polynomials in our basis.  This can be regarded as a non-linear least squares problem in which we have $|\Xi(p)| + 1$ equations and $5N$ unknowns.  However, unlike a typical least squares problem, it is one where we seek a solution with zero residual.   Assuming the abscissa are known we remark that the associated weights may be determined through the solution of a \emph{linear} least squares problem.  This enables us to reduce the number of degrees of freedom in our non-linear problem to $4N$.

In order to enforce symmetry, it is advantageous to decompose our $N$ abscissa into symmetry orbits. The symmetry orbits of the cubic pyramid are similar to those of the hexahedron, with an additional parameter that accounts for the extent of the pyramid in the fourth dimension. Therefore, one may follow the procedure for generating orbits for the hexahedron (see~\cite{witherden2015identification}), and enumerate all symmetry orbits. Thereafter, upon adding a parameter to account for the fourth dimension, the symmetry orbits of the cubic pyramid are written as follows
\begin{equation*}
\begin{aligned}[c]
S_1 (\delta) &= (0,0,0,\delta),\\
S_2 (\alpha,\delta) &= \chi(\alpha,0,0,\delta),\\
S_3 (\alpha,\delta) &= \chi(\alpha,\alpha,\alpha,\delta), \\
S_4 (\alpha,\delta) &= \chi(\alpha,\alpha,0,\delta), \\
S_5 (\alpha,\beta,\delta) &= \chi(\alpha,\beta,0,\delta), \\
S_6 (\alpha,\beta,\delta) &= \chi(\alpha,\alpha,\beta,\delta), \\
S_7 (\alpha,\beta,\gamma,\delta) &= \chi(\alpha,\beta,\gamma,\delta),
\end{aligned}
\qquad \qquad
\begin{aligned}[c]
|S_1| &= 1, \\
|S_2| &= 6, \\
|S_3| &= 8, \\
|S_4| &= 12, \\
|S_5| &= 24, \\
|S_6| &= 24, \\
|S_7| &= 48,
\end{aligned}
\end{equation*}
subject to the constraints $0 < \alpha,\beta,\gamma \leq - \delta$ and $-1 \leq \delta \leq 0$. Here, the operator $\chi$ returns all possible (unique) signed permutations of the input arguments. 

Now, let us provide an example of how these symmetry orbits are used. If $N = 156$, then one possible arrangement of points is four $S_1$ orbits, two $S_2$ orbits, one $S_3$ orbit, three $S_4$ orbits, two $S_5$ orbits, and one $S_7$ orbit whence
\[
 N = 4|S_1| + 2|S_2| + |S_3| + 3|S_4| + 2|S_5| + |S_7| = 156,
\]
as required.  Inspecting the number of arguments for each of the orbits and summing we find that this decomposition has a total of $26$ degrees of freedom.  Together these fix the locations of all of our abscissa and hence become the parameters in our non-linear least squares problem.  We note here that this represents a substantial decrease compared with an asymmetric rule which has some $4N = 624$ degrees of freedom.  Of course, for a given $N$ there are typically a very large number of distinct decompositions into symmetry orbits.  For our example of $N = 156$ there are some $8518$ unique decompositions.

Our overall goal when identifying quadrature rules is to find one with the minimum value of $N$ for a particular strength $p$.  Unfortunately, given a $p$ it is not possible to determine $N$ theoretically; instead it must be treated as a hyper-parameter to the overall optimisation process.  Similarly, once given an $N$ it is, in general, not possible to know \emph{a priori} which particular symmetric decomposition will yield a rule--if any.  Thus, it is also necessary to treat the specific decomposition as a hyper-parameter, too.

We have implemented the aforementioned system into the open source symmetric quadrature rule package Polyquad \cite{witherden2015identification}.  Running this modified version of Polyquad on a workstation, we have identified symmetric quadrature rules on the cubic pyramid for $2 \le p \le 12$.  The total number of points required for these rules can be seen in Table \ref{tab:rules}. 
The abscissa for all of our rules are inside of our reference pyramid, and feature positive weights. The rules themselves are provided in quadruple precision in the electronic supplemental material. An example of the $p = 5$ quadrature rule is provided in Table~\ref{tab:example}.

\begin{table}[h!]
    \centering
    \caption{Number of points $N$ required for a rule of strength $p$ inside of the cubic pyramid.}
    \begin{tabular}{r|rrrrrrrrrrr} \toprule
     $p$ &  2 & 3 & 4 & 5 & 6 & 7 & 8 & 9 & 10 & 11 & 12 \\
     $N$    &  7 & 10 & 21 & 29 & 50 & 65 & 114 & 163 & 251 & 323 & 552 \\
    \bottomrule
    \end{tabular}
    \label{tab:rules}
\end{table}

\begin{table}[h!]
    \centering
    \caption{The $p = 5$ quadrature rule on the cubic pyramid $K^{\ast}$ with $N = 29$ points. The remaining rules are provided in the electronic supplemental material.}
    \begin{tabular}{r|l|l} \toprule
    Orbit & Abscissa & Weight \\ 
    \hline
    $S_1$ & -0.32846339581882957902277574789024 & 0.084184481885656624083314191434215
    \\
    \hline
    $S_2$ & 0.59942266549153142521578075246447 & 0.11340384654119720346152792514806 \\
    & -0.74912930534502345701037962143888 & \\
    \hline
    $S_2$ & 0.76070275297226284532398968584027 & 0.089004419809134534555110633350815 \\
    & -0.95754362470717092670167055158225 & \\
    \hline
    $S_3$ & 0.43384835705078179649267535015814 & 0.033983660519520996993103370708523 \\
    & -0.5766200661792940506119682920933 & \\
    \hline
    $S_3$ & 0.69719510426558290268820473081072 & 0.05368707948202312148400343648805 \\
    & -0.91852788682445245487884982935091 & \\
    \bottomrule
    \end{tabular}
    \label{tab:example}
\end{table}

\subsection{Numerical experiments}

The objective of this section is to numerically validate a subset of the quadrature rules in Table~\ref{tab:rules}. Towards this end, we performed a series of numerical experiments on the rules with strengths $p = 2, 3, \ldots, 9$. These rules are capable of exactly integrating certain monomial functions by construction, as well as approximately integrating certain transcendental functions. In what follows, we limit our focus to: a) weighted combinations of monomial functions, and b) analytic transcendental functions. In addition, we execute all of the numerical experiments in quadruple precision arithmetic in order to demonstrate the full precision of the rules.

\subsubsection{Polynomial integration}

In this section, we evaluate the ability of our quadrature rules to integrate a weighted combination of monomial functions on the reference cubic pyramid $K^{\ast}$. More specifically, we consider the following polynomial function of degree~$m$
\begin{align*}
    f_{\text{poly}} \left(\boldsymbol{x}; m\right) = \sum_{r = 0}^{m} \; \sum_{s=0}^{m-r} \; \sum_{t=0}^{m-r-s} \; \sum_{v=0}^{m-r-s-t} C_{rstv} \, x_{1}^{r} x_{2}^{s} x_{3}^{t} x_{4}^{v},
\end{align*}
where the constants $C_{rstv}$ are given by the following formula
\begin{align*}
    C_{rstv} = 24\frac{\left(r+1\right)\left(s+1\right)\left(t+1\right)\left(v+1\right)}{\left(m+1\right)\left(m+2\right)\left(m+3\right)\left(m+4\right)}.
\end{align*}
One may observe that this function contains all possible distinct monomials of degree $m$. It turns out that the integral of this function on the domain $K^{\ast}$ can be computed exactly as follows
\begin{align*}
    J_{\infty} \left(\boldsymbol{x}; m\right) = \int_{K^{\ast}} f_{\text{poly}} \left(\boldsymbol{x}\right) d\boldsymbol{x} = \sum_{r = 0}^{m} \; \sum_{s=0}^{m-r} \; \sum_{t=0}^{m-r-s} \; \sum_{v=0}^{m-r-s-t} C_{rstv} I_{rstv},
\end{align*}
where
\begin{align*}
    I_{rstv} = \int_{K^{\ast}} x_{1}^{r} x_{2}^{s} x_{3}^{t} x_{4}^{v} \, d\boldsymbol{x} = \frac{\left(1+ \left(-1\right)^{r}\right)\left(1+ \left(-1\right)^{s}\right)(1+ \left(-1\right)^{t}) \left(-1\right)^{v}}{\left(1+r\right)\left(1+s\right)\left(1+t\right)\left(r+s+t+v+4\right)},
\end{align*}
is the exact integral of each monomial. 

The approximate integral of $f_{\text{poly}}$ can be computed by evaluating the following quadrature formula
\begin{align*}
    J_{p} = \sum_{j=1}^{N} \omega_{j} f_{\text{poly}} \left(\boldsymbol{x}_j\right),
\end{align*}
where the number of points $N = N\left(p\right)$, weights $\omega_j = \omega_j \left(p\right)$, and abscissa $\boldsymbol{x}_j = \boldsymbol{x}_j \left(p\right)$ are functions of the quadrature rule strength $p$. Evidently, we obtain exact integration $J_{p} = J_{\infty}$ when $p \geq m$, and approximate integration $J_{p} \approx J_{\infty}$ when $p < m$.

Figure~\ref{polynomial_error} shows the quadrature error produced by integrating $f_{\text{poly}}$ for different values of $m$ and $p$. In accordance with expectations, we obtain exact integration whenever $p \geq m$, to within machine precision ($10^{-32}$).
\begin{figure}[h!]
\begin{center}
\includegraphics[width=9.5cm]{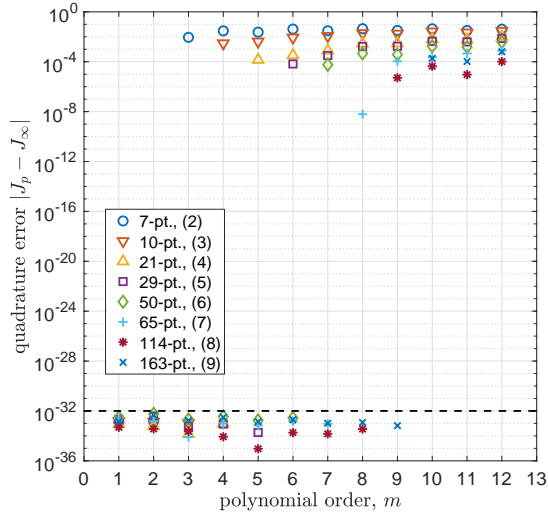}
\end{center}
\caption{Absolute error in the numerical integration of $f_{\text{poly}}$ over the reference cubic pyramid $K^{\ast}$. The dashed line marks the threshold of machine precision.}\label{polynomial_error}
\end{figure}

\subsubsection{Transcendental integration}

In this section, we evaluate the ability of our quadrature rules to integrate a set of transcendental functions on a family of successively refined meshes. For this purpose, we consider the following functions
\begin{align*}
    f_1 \left(\boldsymbol{x}\right) &= \sin\left(\pi x_{1}^{2}\right) \sin\left(\pi x_{2}^{2}\right) \sin\left(\pi x_{3}^{2}\right) \sin\left(\pi x_{4}^{2}\right), \\[1.5ex]
    f_2 \left(\boldsymbol{x}\right) &= \exp\left(x_{1}^{2}\right) \exp\left(x_{2}^{2}\right) \exp\left(x_{3}^{2}\right) \exp\left( x_{4}^{2}\right), \\[1.5ex]
    f_3 \left(\boldsymbol{x}\right) &= \exp\left( x_1 \right) \exp\left( \tfrac{1}{2} x_2 \right) \exp\left( \tfrac{1}{3} x_3 \right) \exp\left( \tfrac{1}{4} x_4 \right).
\end{align*}
Note that the first two functions are symmetric with respect to the coordinates $x_1, \ldots, x_4$, whereas the last function is asymmetric. 

The integral of each transcendental function was computed on the domain $\Omega = \left[0,1\right]^{4}$ as follows
\begin{align*}
    J_{\infty} = \int_{\Omega} f_{\text{trans}} \left(\boldsymbol{x}\right) d \boldsymbol{x}.
\end{align*}
The analytical solution of this integral is generally unknown. As a result, the `exact' integration was carried out using a vectorized adaptive integration routine in Matlab. This routine approximates the integral of a smooth function to an arbitrary level of precision by successively subdividing the domain of integration, and leveraging nested Gauss-Kronrod quadrature rules to estimate the integration error on each subinterval. One may consult the work of Shampine~\cite{shampine2008matlab,shampine2008vectorized} for details of the Matlab implementation, and Notaris~\cite{notaris2016gauss} for a general review of Gauss-Kronrod quadrature rules. In our case, we used the adaptive Matlab routine to approximate the integrals to within machine precision.

The transcendental functions above were also integrated using the aforementioned quadrature rules on the cubic pyramid. In order to facilitate this process, the domain $\Omega$ was covered with a uniform mesh of $M^{4}$ tesseract elements where $M$ is a positive integer. Thereafter, each tesseract was subdivided into 8 cubic pyramid elements in accordance with the partition operator $\mathcal{B}$, yielding a total of $N_{e} = 8M^{4}$ cubic pyramid elements. The quadrature rules were transformed (mapped) from the reference cubic pyramid $K^{\ast}$ to the individual cubic pyramids in the mesh via a linear mapping procedure. The approximate integral of each function was then computed in the following fashion
\begin{align*}
    J_{p} = \sum_{i=1}^{N_{e}} \sum_{j=1}^{N} \omega_{j}^{K_i} f_{\text{trans}} \left(\boldsymbol{x}_{j}^{K_i} \right),
\end{align*}
where $\omega_{j}^{K_i}$ are the quadrature weights and $\boldsymbol{x}_{j}^{K_i}$ are the point locations in each element $K_i$. 

Figures~\ref{f1_error}, \ref{f2_error}, and \ref{f3_error} illustrate the quadrature errors for integrating $f_1$, $f_2$, and $f_3$ on $\Omega$ for mesh parameters $M = 1, 2, \ldots, 10$. In each case, the error converges at a rate of $h^{p+1}$ or higher. In addition, the pairs of even and odd quadrature rules converge at the same rates, e.g. rules with $p =2$ and $p= 3$ converge at a rate of $h^4$. Upon combining these insights together, we estimate that the rules will typically converge at a rate of $h^{p+2}$ for even values of $p$, and $h^{p+1}$ for odd values of $p$. The precise reason for this behavior is unknown, although it is likely due to fortunate cancellations of truncation error terms for quadrature rules with even values of $p$.

In addition, we note that increasing the number of quadrature points (by increasing $p$) does not always yield a more accurate result when  integrating transcendental functions. For example, the 50 point rule with $p = 6$ sometimes outperforms the 65 point rule with $p = 7$ (see Figure~\ref{f3_error}). This behavior is likely due to a convenient alignment between the symmetry orbits of the $p = 6$ rule and the local topology of the transcendental function $f_3$. We note that this trend is not general, as it does not hold for $f_1$ and $f_2$. In fact, in most cases the higher degree rules outperform the lower degree rules, as expected. 

\begin{figure}[h!]
\begin{center}
\includegraphics[width=9.5cm]{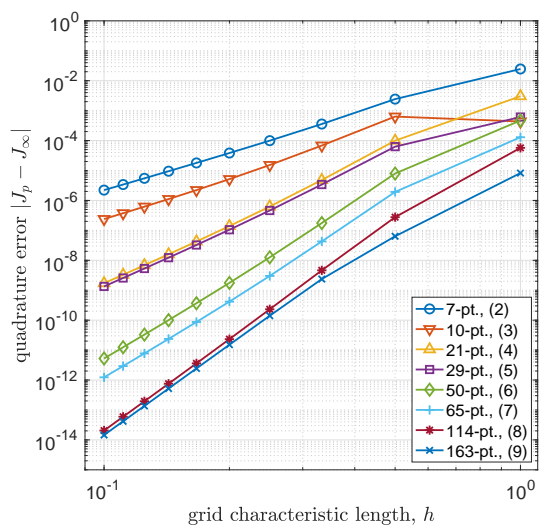}
\end{center}
\caption{Absolute value of the error in the numerical integration of $f_1$ over the domain $\Omega$ for mesh parameters $M = 1, 2, \ldots, 10$.}\label{f1_error}
\end{figure}
\begin{figure}[h!]
\begin{center}
\includegraphics[width=9.5cm]{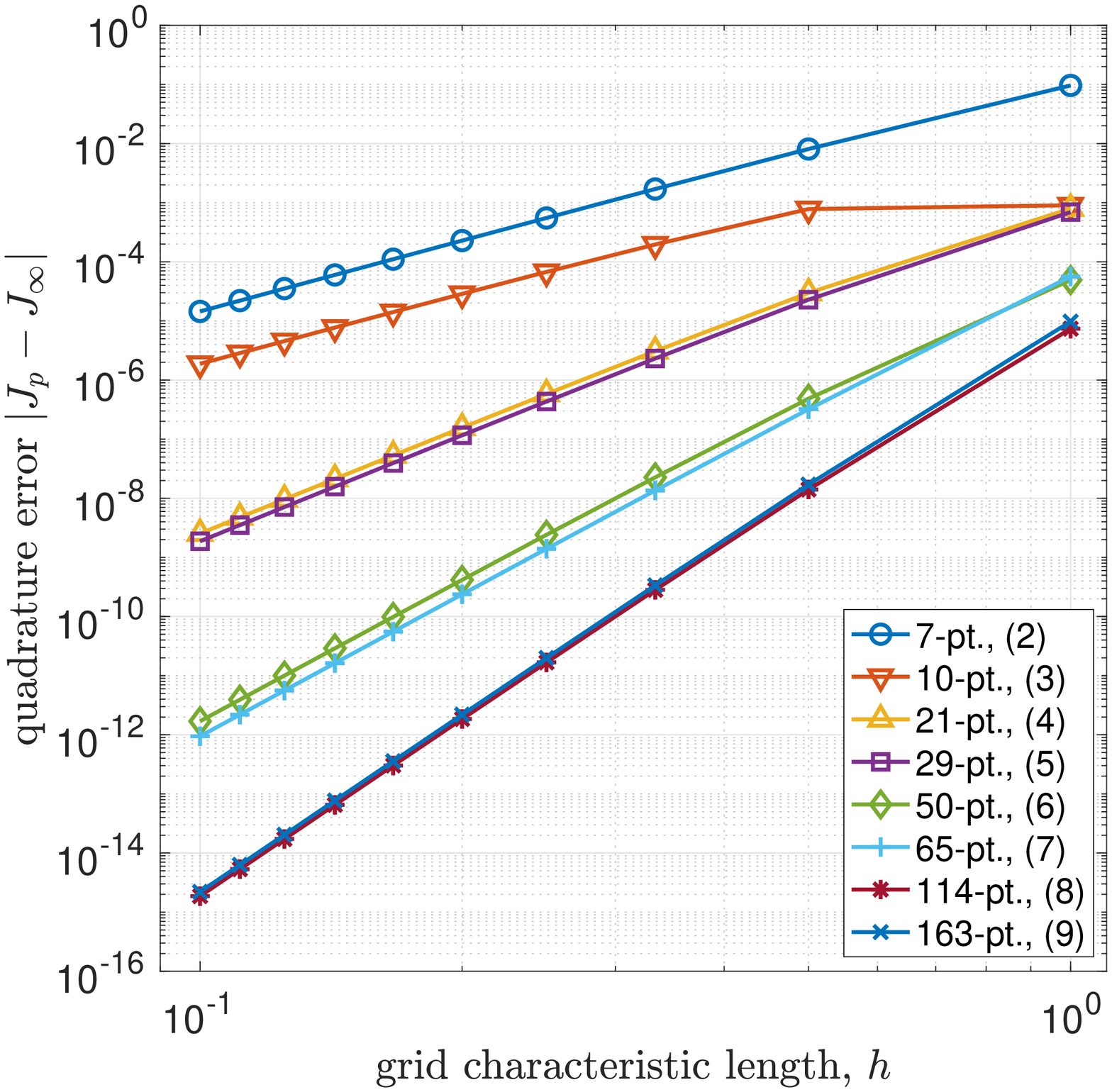}
\end{center}
\caption{Absolute value of the error in the numerical integration of $f_2$ over the domain $\Omega$ for mesh parameters $M = 1, 2, \ldots, 10$.}\label{f2_error}
\end{figure}

\pagebreak
\clearpage

\begin{figure}[h!]
\begin{center}
\includegraphics[width=9.5cm]{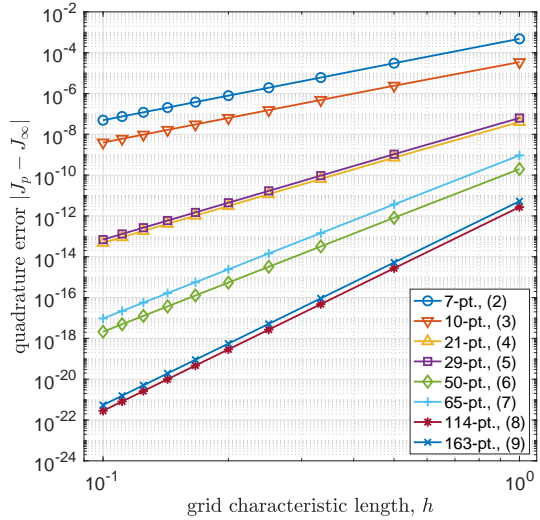}
\end{center}
\caption{Absolute value of the error in the numerical integration of $f_3$ over the domain $\Omega$ for mesh parameters $M = 1, 2, \ldots, 10$.}\label{f3_error}
\end{figure}

\section{Conclusion}

In this work, we have presented a novel refinement strategy for four-dimensional cubic pyramids.  Given a cubic pyramid our method subdivides it into a conforming set of smaller cubic pyramids and bipentatopes.  Moreover, all of the edges of each cubic pyramid have the same length, as do the edges of each bipentatope, which corresponds to an optimal configuration. Furthermore, we have also developed and evaluated a new set of polynomial quadrature rules inside of the cubic pyramid.  Together, these results open up new pathways for four-dimensional meshing for space-time finite element methods.

For the sake of completeness, we also provided a comprehensive example of a four-dimensional conformal hybrid mesh on a canonical domain. The initial mesh contains tesseract and cubic pyramid elements. The mesh is conformal because the couplings between the tesseracts and the cubic pyramids are consistent, without the need for an interface subdomain. Following the formation of the initial mesh, it is refined by subdividing the tesseract elements in the standard fashion (splitting along planes that are orthogonal to the coordinate axes), and by subdividing the cubic pyramids in the aforementioned novel fashion. We believe that this strategy can be generalized to a much broader class of hybrid meshes, including curved meshes, without significant modifications.

\section*{Declarations}

\subsubsection*{Funding}

The authors did not receive support from any organization for the submitted work.

\subsubsection*{Conflict of interest/Competing interests}

The authors have no conflicts of interest to declare that are relevant to the content of this article.

\subsubsection*{Availability of data and material}

The quadrature rules are available as electronic supplemental material.

\subsubsection*{Code availability}

The Polyquad code is available at https://github.com/PyFR/Polyquad.


\bibliographystyle{spmpsci}      
\bibliography{spacetimereferences}   

%
%

\end{document}